\newtheorem{theorem}{Theorem}[section]
\newtheorem*{theorem*}{Theorem}
\newtheorem*{corollary*}{Corollary}
\newtheorem{lemma}[theorem]{Lemma}
\newtheorem{corollary}[theorem]{Corollary}
\title{Popular Values of the Largest Prime Divisor Function}
\author{NATHAN MCNEW}
\address{Department of Mathematics, Dartmouth College \\ Hanover, NH 03755 USA}
\email{nathan.g.mcnew@dartmouth.edu}
\subjclass[2010]{11N25} 
\date{}
\begin{document}

\begin{abstract}
We consider the distribution of the largest prime divisor of the integers in the interval $[2,x]$, and investigate in particular the mode of this distribution, the prime number(s) which show up most often in this list. In addition to giving an asymptotic formula for this mode as $x$ tends to infinity, we look at the set of those prime numbers which, for some value of $x$, occur most frequently as the largest prime divisor of the integers in the interval $[2,x]$.  We find that many prime numbers never have this property.  We compare the set of ``popular primes,'' those primes which are at some point the mode, to other interesting subsets of the prime numbers.  Finally, we apply the techniques developed to a similar problem which arises in the analysis of factoring algorithms.
\end{abstract}
\maketitle

\section{Introduction}
Let $P(n)$ denote the largest prime divisor of an integer $n \geq 2$.  The distribution of the values of this function as $n $ ranges over the interval $[2,x]$ has been considered by several authors.  Alladi and Erd\H{o}s \cite{AlEr} investigated the average order of $P(n)$ (as well as the average order of the $k$-th largest prime factor) and showed that
\begin{equation}
\frac{1}{x}\sum_{n\leq x} P(n) = \frac{\pi^2x}{12\log x}+O\left(\frac{x}{\log^2 x}\right).
\end{equation}
This fact was later shown by Kemeny \cite{Kemeny} using different methods, and improved upon by De Koninck and Ivi\'c, who showed that there exist constants $d_1,d_2 \ldots$ such that for any $m\geq 1$,
\begin{equation}
\frac{1}{x}\sum_{n\leq x} P(n) = x\left(\frac{d_1}{\log x}+\frac{d_2}{\log^2 x} + \cdots + \frac{d_m}{\log^m x} + O\left(\frac{1}{\log^{m+1} x}\right)\right). \label{improvedmean}
\end{equation}
uniformly in $m$.  Naslund \cite{NaslundMean} worked out the values of the constants in this expression, in particular
\begin{equation}
d_m = \sum_{j=0}^m\frac{(-1)^j\zeta^{(j)}(2)}{j!2^{m+1-j}}.
\end{equation}

The median value, $M(x)$, of $P(n)$ as $n$ ranges over the integers in $[2,x]$ was considered by Selfridge and Wunderlich \cite{SeWu} who noted that $M(x) = x^{\frac{1}{\sqrt{e}}+o(1)}$.  The result itself is much older, however, and was essentially Vinogradov's trick for extending the usefulness of character sums.  Naslund \cite{NaslundMed} shows that this median value is given more accurately by 
\begin{equation}
M(x) = e^{\frac{\gamma -1}{\sqrt{e}}}x^{\frac{1}{\sqrt{e}}}\left(1+\frac{c_1}{\log x} + \frac{c_2}{\log^2 x} + \cdots +\frac{c_m}{\log^m x} +O_m\left(\frac{1}{\log^{m+1} x}\right)\right) \label{median}
\end{equation}
where the $c_i$ are computable constants. 

Note that the median value grows substantially slower than the mean value, which indicates that the distribution is skewed strongly to the right.  De Koninck \cite{koninckmode} shows that a mode of this distribution (note that the mode need not necessarily be unique), corresponding to a prime number which occurs with maximal frequency as the largest prime divisor of the integers in $[2,x]$, grows even slower still, slower than any power of $x$.  More precisely, he shows the mode is given by \begin{equation}
e^{\sqrt{\frac{1}{2}\log x \left(\log \log x+ \log \log \log x+O(1)\right)}} \label{koninckmode}
\end{equation}
though in his result the $O(1)$ term is incorrectly given as being $o(1)$.  In what follows, we will say that a prime $p$ is \textbf{popular on the interval} $\mathbf{[2,x]}$ if no prime occurs more frequently than $p$ as the largest prime divisor of the integers in that interval.  While the asymptotic behaviors of the mean and median values of this distribution, as in \eqref{improvedmean} and \eqref{median}, are well understood, the relative error term in \eqref{koninckmode} is quite large. The primary goal of this paper is to improve \eqref{koninckmode} and in particular give the following asymptotic formula, which we prove in Section \ref{results}.  

\begin{theorem} \label{intro:mode}
If the prime $p$ is popular on the interval $[2,x]$ (i.e., $p$ is a mode of the distribution of the largest prime divisor function for that interval) then $p$ satisfies 
\begin{equation*} 
p = \exp\left\{ \sqrt{\nu(x) \log x }+\frac{1}{4}\left(1-\frac{\nu(x)-3}{2\nu(x)^2 -3\nu(x)+1}\right)\right\}\left(1+ O\left(\left(\frac{\log \log x}{\log x}\right)^{1/4}\right)\right) \end{equation*}
where $\nu(x)$ is the solution to the implicitly defined equation $e^{\nu(x)} = 1+ \sqrt{\nu(x)\log x}-\nu(x)$
and is given approximately by
\begin{equation*}
\nu(x) = \tfrac{1}{2}\log \log x + \tfrac{1}{2}\log \log \log x -\tfrac{1}{2}\log 2 + o(1) 
\end{equation*} as $x \to \infty$.
\end{theorem}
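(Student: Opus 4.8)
The plan is to analyze the counting function directly. For a prime $p$, the number of integers $n \in [2,x]$ with $P(n) = p$ equals the number of $p$-smooth integers up to $x/p$, namely $\Psi(x/p, p)$, where $\Psi(y,z)$ counts $z$-smooth integers up to $y$. So $p$ is popular on $[2,x]$ precisely when $\Psi(x/p,p)$ is maximized over primes $p$. The first step is to pin down the range in which the maximizing $p$ lives: De Koninck's result \eqref{koninckmode} already tells us that $\log p$ is of order $\sqrt{\log x \log\log x}$, so we may write $u = \log(x/p)/\log p$ and observe that $u \to \infty$ while $\log p = o(\log x)$, putting us squarely in the regime governed by the Dickman–de Bruijn asymptotics for $\Psi$. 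Concretely I would use the sharp form $\Psi(y,z) = y\rho(u)(1+o(1))$ together with the asymptotic expansion $\log\rho(u) = -u\left(\log u + \log\log u - 1 + \frac{\log\log u - 1}{\log u} + O\left((\log\log u/\log u)^2\right)\right)$, valid uniformly in the relevant range.

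Next I would convert the maximization into a calculus problem. Writing $L(p) = \log\Psi(x/p,p)$, we have, to the needed precision, $L(p) = \log(x/p) + \log\rho(u) + (\text{error})$ with $u = (\log x)/(\log p) - 1$. Substituting $t = \log p$ and expanding, the main term becomes $\log x - t - u\bigl(\log u + \log\log u - 1 + \frac{\log\log u-1}{\log u}+\cdots\bigr)$ where $u = (\log x)/t - 1$. Differentiating with respect to $t$ and setting the derivative to zero yields, after simplification, an equation whose dominant balance is $\frac{\log x}{t^2}\bigl(\log u + \log\log u\bigr) \approx 1$, i.e. $t^2 \approx (\log x)(\log u + \log\log u)$. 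Iterating this (since $u \approx \sqrt{\log x/(\log u + \cdots)}$, so $\log u \approx \tfrac12\log\log x - \tfrac12\log(\cdots)$) produces the implicitly defined quantity $\nu(x)$: one checks that setting $t = \sqrt{\nu(x)\log x}$ with $e^{\nu(x)} = 1 + \sqrt{\nu(x)\log x} - \nu(x)$ exactly captures the relation $u+1 = e^{\nu}$, which is the natural closed form of the iteration. The asymptotic $\nu(x) = \tfrac12\log\log x + \tfrac12\log\log\log x - \tfrac12\log 2 + o(1)$ then follows by substituting back and solving the resulting transcendental equation to that order.

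The correction term $\frac{1}{4}\bigl(1 - \frac{\nu-3}{2\nu^2-3\nu+1}\bigr)$ inside the exponential is the subtle part: it comes from carrying the second-order terms in the expansion of $\log\rho(u)$ (the $\frac{\log\log u - 1}{\log u}$ piece and the variation of $u$ with $t$) one step further in the stationary-point analysis, then writing $t = \sqrt{\nu(x)\log x} + \delta(x)$ and solving for $\delta(x)$ to order $1$. The denominator $2\nu^2 - 3\nu + 1 = (2\nu-1)(\nu-1)$ strongly suggests it arises as (a multiple of) the second derivative $L''$ at the critical point, or equivalently from differentiating the defining relation for $\nu$, since $\frac{d}{d\nu}\bigl(e^\nu - 1 + \nu\bigr)$ and related combinations naturally produce such factors. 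I would verify the coefficient by a careful Taylor expansion around the critical $t$.

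The main obstacle I anticipate is \textbf{uniformity and error control}: to get a relative error of $O\bigl((\log\log x/\log x)^{1/4}\bigr)$ in $p$ — equivalently an additive error of $O\bigl((\log x)^{1/4}(\log\log x)^{1/4}\bigr) \cdot (\text{something})$ in $\log p$, though actually the stated form means the error in $\log p$ is $O((\log\log x/\log x)^{1/4})$, a genuinely small quantity — one needs the asymptotic expansion of $\rho(u)$ with an explicit error term that is uniform as $u$ ranges over an interval around the critical point, and one must confirm that $\Psi(y,z)/(y\rho(u))$ does not fluctuate enough (as $p$ varies) to shift the location of the maximum beyond the claimed tolerance. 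This requires quantitative smooth-number estimates (of Hildebrand–Tenenbaum type) rather than just the leading-order $\rho$ approximation, and checking that the second derivative $L''(p)$ is bounded below in absolute value by a quantity large enough that the $o(1)$ and error terms in $L$ translate into the claimed $O\bigl((\log\log x/\log x)^{1/4}\bigr)$ localization of the maximizer. I would handle this by establishing $L''(p) \asymp -1/(\log p)^2 \cdot (\text{log factors})$ near the peak and a Taylor bound $L(p) = L(p_0) + \tfrac12 L''(p_0)(\log p - \log p_0)^2 + (\text{smaller})$, from which any $p$ with $\Psi(x/p,p)$ maximal must satisfy $|\log p - \log p_0| \ll (\log\log x/\log x)^{1/4}$ after balancing against the size of the neglected terms.
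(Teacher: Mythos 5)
Your overall strategy coincides with the paper's: reduce to maximizing $\Psi(x/p,p)$, localize the maximizer with smooth-number asymptotics, and extract the constant term by a second-order expansion about the critical point. The difficulty is that the tools you actually name are not strong enough to produce the term $\tfrac14\bigl(1-\tfrac{\nu-3}{2\nu^2-3\nu+1}\bigr)$, and this is where the real work of the theorem lies. An additive constant $A$ in $\log p$ corresponds to a relative change of order $A/\log p$ in $\Psi(x/p,p)$ near the peak (the second derivative of $\log\Psi$ in $\log p$ there is $\asymp 1/\log p$, not $1/(\log p)^2$ as you guessed). Hildebrand's estimate $\Psi(x,y)=x\rho(u)(1+O(\log(u+1)/\log y))$ has relative error $O(\log\log x/\log p)$, which is a factor $\log\log x$ larger than the effect you are trying to resolve; and the expansion $\log\rho(u)=-u(\log u+\log\log u-1+\cdots)$ you quote carries an absolute error of size $u(\log\log u/\log u)^2\to\infty$, so differentiating it (which is not legitimate for asymptotic expansions in any case) cannot locate the maximum to within $O(1)$ in $\log p$, let alone $O((\log\log x/\log x)^{1/4})$. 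You flag at the end that ``quantitative smooth-number estimates'' are needed, but you neither identify which ones nor check that estimates of the required strength exist, and the correction term is exactly the part of the statement that depends on them.

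The paper's proof supplies two specific second-order inputs that your sketch is missing. First, Saias' two-term expansion $\Psi(x,y)=x\bigl(\rho(u)+(\gamma-1)\rho'(u)/\log y\bigr)\bigl(1+O((\log(u+1)/\log y)^2)\bigr)$, whose squared relative error $O((\log\log x)^2/\log^2 p)$ is finally smaller than $1/\log p$; the $(\gamma-1)\rho'(u)/\log y$ term itself turns out not to shift the maximizer, but one must use this form to know that. Second, a refined shift formula $\rho(u+v)e^{v\xi(u)}/\rho(u)=1-\tfrac{v}{2u}\bigl(1+\tfrac{v\xi(u)}{\xi(u)-1}+\tfrac{1}{(\xi(u)-1)^2}\bigr)+O(1/u^2)$, derived from the Smida--Xuan second-order asymptotics for $\rho$; the standard $\rho(u-1)/\rho(u)=u\xi(u)(1+O(1/u))$ has an error of the same order as the linear-in-$c$ term $\tfrac{c}{2u_0}(1+\tfrac{1}{(\nu-1)^2})$ that actually determines the constant $\tfrac14(1-\cdots)$. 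The paper also avoids differentiation entirely by comparing values at $P_0=\exp\{\sqrt{\nu(x)\log x}\}$ and $s=P_0e^{c\nu(x)}$ as ratios and optimizing the resulting quadratic in $c$. Finally, a small slip: the defining relation is $e^{\nu}=1+u_0\nu$ (i.e.\ $\nu=\xi(u_0)$ with $u_0=\sqrt{\log x/\nu}-1$), not $u+1=e^{\nu}$.
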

Using this we also give an asymptotic expression for the frequency with which the mode value occurs, improving the approximation given in \cite[Theorem 1]{koninckmode}. 
\begin{theorem} \label{intro:modeheight} If $p$ is popular on the interval $[2,x]$, then the number of integers $n \in [2,x]$ for which $P(n)=p$ is given asymptotically by 
\begin{equation}
\Psi\left(\frac{x}{p},p\right) = \frac{x}{\sqrt{2\pi\log x}}\exp\left\{-2\sqrt{\nu(x)\log x}+ \int_{0}^{\nu(x)}\frac{e^s {-}1}{s}ds+\frac{3\nu(x)}{2}+\gamma +O\left(\frac{1}{\log \log x}\right)\right\}. \label{intromodeheight}
\end{equation}
\end{theorem}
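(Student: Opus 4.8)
The plan is to begin from the exact identity
\[
\#\{\,2\le n\le x:\ P(n)=p\,\}=\Psi(x/p,p),
\]
valid because each such $n$ is uniquely $n=pk$ with $k\le x/p$ and every prime factor of $k$ at most $p$. By Theorem~\ref{intro:mode} the popular prime $p$ is pinned down up to a relative error $O\bigl((\log\log x/\log x)^{1/4}\bigr)$; writing $\nu=\nu(x)$ we have $\log p=\sqrt{\nu\log x}+\tfrac14\bigl(1-\tfrac{\nu-3}{2\nu^2-3\nu+1}\bigr)+O\bigl((\log\log x/\log x)^{1/4}\bigr)$, and the smoothness parameter $u:=\log(x/p)/\log p=\sqrt{\log x/\nu}-1+O(1/\nu)$ tends to infinity while $\log p\gg(\log x)^{1/2}$. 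This places $(x/p,p)$ well inside the range of validity of de~Bruijn's estimate (in the sharp form due to Hildebrand), so that with $\rho$ the Dickman function
\[
\Psi(x/p,p)=\frac{x}{p}\,\rho(u)\Bigl(1+O\bigl(\tfrac{\log(u+1)}{\log p}\bigr)\Bigr)=\frac{x}{p}\,\rho(u)\bigl(1+O\bigl(\sqrt{\log\log x/\log x}\,\bigr)\bigr),
\]
the relative error being already $o(1/\log\log x)$.

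Next I would insert the classical de~Bruijn--Alladi asymptotic for $\rho$: writing $\xi=\xi(u)$ for the unique positive solution of $e^{\xi}=1+u\xi$ (so $\xi'(u)=\xi/(e^{\xi}-u)$),
\[
\rho(u)=\sqrt{\frac{\xi'(u)}{2\pi}}\exp\Bigl\{\gamma-u\xi(u)+\int_0^{\xi(u)}\frac{e^{t}-1}{t}\,dt\Bigr\}\bigl(1+O(1/u)\bigr),
\]
with $1/u=O(\sqrt{\nu/\log x})$ negligible. Taking logarithms, the task reduces to evaluating the four $p$- and $u$-dependent quantities $\log p$, $\tfrac12\log\xi'(u)$, $u\xi(u)$, and $\int_0^{\xi(u)}(e^{t}-1)t^{-1}\,dt$ in terms of $\nu(x)$, to within an additive error $O(1/\log\log x)$.

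The core of the argument is a perturbation analysis around $\nu=\nu(x)$. From the formula for $\log p$ one gets $u\nu=\sqrt{\nu\log x}-\nu-c+O(1/\log\log x)$, where $c$ denotes the bounded second term of $\log p$; since $e^{\nu}=1+\sqrt{\nu\log x}-\nu$ by definition this says $e^{\nu}-1-u\nu=c+O(1/\log\log x)$, so $\nu$ nearly solves $e^{\xi}-1=u\xi$, and solving for the discrepancy yields $\xi(u)=\nu+O(1/\sqrt{\nu\log x})$. Hence $u\xi(u)=\sqrt{\nu\log x}-\nu-c+O(1/\log\log x)$; the integral is $\int_0^{\nu}(e^{t}-1)t^{-1}\,dt+O(1/\log\log x)$ (the integrand near $\nu$ is $(e^{\nu}-1)/\nu\approx u$, times the displacement $\xi(u)-\nu$); and a short computation from $\xi'(u)=\xi/(e^{\xi}-u)$ gives $\tfrac12\log\xi'(u)=\tfrac14\log\nu-\tfrac14\log\log x+O(1/\nu)$. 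Assembling these, the bounded constant $c$ cancels between $-\log p$ and $-u\xi(u)$, leaving $-2\sqrt{\nu\log x}+\nu+\tfrac14\log\nu-\tfrac14\log\log x$ for $-\log p+\tfrac12\log\xi'(u)-u\xi(u)$. Finally, squaring the defining relation for $\nu$ gives $\nu\log x=(e^{\nu}+\nu-1)^2=e^{2\nu}\bigl(1+O(\nu e^{-\nu})\bigr)$, hence $\tfrac14\log\nu+\tfrac14\log\log x=\tfrac{\nu}{2}+O(\sqrt{\nu/\log x})$, converting $\nu+\tfrac14\log\nu-\tfrac14\log\log x$ into $\tfrac{3\nu}{2}-\tfrac12\log\log x$. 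Collecting everything, $\log\Psi(x/p,p)=\log x-\tfrac12\log(2\pi\log x)-2\sqrt{\nu\log x}+\int_0^{\nu}(e^{t}-1)t^{-1}\,dt+\tfrac{3\nu}{2}+\gamma+O(1/\log\log x)$, which is exactly \eqref{intromodeheight}.

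The main obstacle is precisely this bookkeeping. Because the target error is only $O(1/\log\log x)$, each source of imprecision — the relative error in $\Psi\sim(x/p)\rho(u)$, the $O(1/u)$ in the Dickman asymptotic, the $O\bigl((\log\log x/\log x)^{1/4}\bigr)$ uncertainty in $p$ from Theorem~\ref{intro:mode}, and the perturbative error $\xi(u)-\nu$ — must be checked to be $\ll 1/\log\log x$, and one must verify both the exact cancellation of the bounded constant $c$ and the identity $\tfrac12\log(\nu\log x)=\nu+O(\sqrt{\nu/\log x})$ that repackages the logarithmic terms; a less careful treatment would leave a spurious bounded summand in the exponent of \eqref{intromodeheight}.
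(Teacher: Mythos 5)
Your proposal is correct and follows essentially the same route as the paper: use Theorem \ref{intro:mode} to pin down $\log p = \sqrt{\nu\log x}+\tfrac14+O(1/\nu)$, apply Hildebrand's estimate together with the Alladi asymptotic for $\rho(u)$, show $\xi(u)=\nu(x)+O(1/\sqrt{\nu(x)\log x})$ so the integral and $u\xi(u)$ terms can be rewritten in terms of $\nu(x)$, and observe that the bounded constant cancels between $-\log p$ and $-u\xi(u)$ while $e^{\nu/2}=(\nu\log x)^{1/4}(1+o(1))$ converts the $\sqrt{\xi'(u)/2\pi}$ prefactor into $(2\pi\log x)^{-1/2}e^{\nu/2}$. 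The bookkeeping and error estimates you describe match the paper's.
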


In \cite{KoSw} De Koninck and Sweeney consider further the frequency with which prime numbers occur as the largest prime divisor on the interval $[2,x]$.  They note that for a fixed value of $x$ there exists an initial interval $[2,f(x)]$ of primes, $p$ on which the frequency with which $p=P(n)$, monotonically increases at each prime, an intermediate range, $(f(x),g(x))$ where the behavior is oscillatory, and a final interval $[g(x),x]$ on which it monotonically decreases.  They show that for sufficiently large $x$, $f(x)\leq \sqrt{\log x}$ and $g(x) \geq \sqrt{x}$. Clearly the mode value lies somewhere in the intermediate interval.  The oscillatory behavior and the exact value of the mode depends on the spacing and gaps between the primes near this peak value.  

Somewhat surprisingly one finds that there are primes which are not popular on any interval $[2,x]$,  and experimentally it appears that in fact most primes are not.  We therefore define a prime to be a \textbf{popular prime} if it is popular on an interval $[2,x]$ for some value of $x$.  In Section \ref{popprimes} we investigate further this subset of the primes. Clearly there must be infinitely many popular primes.  We are able to show that there is also a positive proportion of prime numbers which are not popular.  To do this we show that the average prime spacing between popular primes cannot be too small.  We prove a more general result which implies the following bound on their spacing.  
\begin{theorem} 
Given any two sufficiently large consecutive primes, $p<q$, if the gap between them, $q-p$, is less than $0.153\log p$, then $p$ is not a popular prime.
\end{theorem}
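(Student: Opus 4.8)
The plan is to turn ``$p$ is a popular prime'' into a statement purely about the pattern of prime gaps near $p$, and then show that if the gap $q-p$ after $p$ is below the stated threshold, this pattern condition is inconsistent for every choice of $x$.

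First, $p$ is popular on $[2,x]$ exactly when $\Psi(x/p,p)\ge\Psi(x/p',p')$ for \emph{all} primes $p'$. Writing $p^{(-k)}$, $p^{(+k)}$ for the $k$-th prime below/above $p$ (so $p^{(+1)}=q$, $p^{(-1)}=p^-$), each comparison $\Psi(x/p,p)\gtrless\Psi(x/p^{(\pm k)},p^{(\pm k)})$ telescopes through the consecutive-prime ratios, and the one-step asymptotic is the technical input inherited from the analysis behind Theorems~\ref{intro:mode}--\ref{intro:modeheight}: in the range where $p$ can be a mode,
\[
\log\frac{\Psi(x/p,p)}{\Psi(x/p^-,p^-)}=\frac{\phi(x,p)-(p-p^-)}{p}+(\text{explicit lower-order terms}),
\]
where $\phi(x,p)$ is smooth and decreasing in $p$, and near the mode $\phi(x,p)=\log p+O(\nu(x))$ with $\nu(x)$ as in Theorem~\ref{intro:mode}. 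Summing over the primes in $(p^{(-k)},p]$ (resp. $(p,p^{(+k)}]$), estimating the smooth part $\sum\phi(x,p')/p'$ by Mertens-type integrals while keeping the individual gaps in the rough part $\sum(p'-(p')^-)/p'=\log p-\log p^{(-k)}+o(1)$, one extracts a criterion: $p$ is a mode at $x$ iff, up to the lower-order corrections,
\[
\sup_{k\ge 1}\frac{p-p^{(-k)}}{k}\ \le\ \phi(x,p)\ \le\ \inf_{k\ge 1}\frac{p^{(+k)}-p}{k},
\]
with corrections of size $\Theta(\nu(x))$ depending on the gaps near $p$ and on how far $\log p$ lies from the ``continuous mode'' attached to $x$. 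As $x$ varies, $\phi(x,p)$ sweeps out a continuum of values of order $\log p$, so $p$ is popular iff this two-sided inequality can be made consistent for some admissible parameter. (A secondary subtlety: for non-adjacent competitors the ``sum equals integral'' approximation is only valid once many primes are involved, so the smooth and rough parts of the sums must be separated carefully.)

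The final step is the optimization behind the constant. A long backward run of gaps has average $\sim\log p$, so the left-hand supremum is always $\gtrsim\log p$ unless the $\Theta(\nu(x))$ corrections reduce it; these corrections have the shape (positive quantity)$\cdot(\tfrac12-\delta)$ with $\delta$ the normalized offset of $\log p$ from the continuous mode, so they help only by a bounded amount and only for a restricted range of the parameter, which simultaneously makes the right-hand infimum harder to satisfy. Optimizing the trade-off over the choice of $x$ (equivalently over $\nu(x)$, or over $\delta$) against the worst admissible backward-gap configuration, what survives is that $p$ can be a mode for some $x$ only if $q-p\ge\tfrac{1-\log 2}{2}\log p\approx 0.153\log p$; hence $q-p<0.153\log p$ forces $p$ to be unpopular. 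The hard part is precisely this combination --- carrying the one-step expansion far enough to pin down the $\Theta(\nu(x))$-size corrections (right at the mode the ``drift'' $\phi(x,p)-(p-p^-)$ is itself only of that size, so a leading-order analysis does not suffice), and then executing the two-parameter optimization that produces the explicit value $\tfrac{1-\log 2}{2}$; the reduction to a gap condition and the telescoping are comparatively routine.
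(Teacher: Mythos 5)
Your reduction to a local comparison of $\Psi(x/p,p)$ with its neighbours via telescoped one-step ratios is in the right spirit, and the one-step increment really does have the shape you describe: by the Buchstab-type identity of Lemma \ref{genbuch} with $k=1$ one has $\Psi(x/q,q)=\Psi(x/q,p)+\Psi(x/q^2,q)$, so the comparison of $p$ with $q$ amounts to comparing a count of $p$-smooth numbers in the short interval $(x/q,\,x/p]$ against $\Psi(x/q^2,q)\approx \frac{x}{pq}\rho(u_0)\,u_0\xi(u_0)\approx\frac{x}{pq}\rho(u_0)\log p$. The crux — and the step your proposal never confronts — is bounding that short-interval count from above. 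The coefficient of the gap in your expansion $\phi(x,p)-(p-p^-)$ is the local density of smooth numbers in a specific interval of length $\asymp x\log p/p^2$, and this is not a smooth (let alone monotone) function of $p$; unconditionally one only has Hildebrand's short-interval bound \eqref{psiloc}, which caps the local density at $(2-\alpha)/\rho(2-\alpha)$ times the global one. That bound is the sole source of the constant: $\rho(2)/2=(1-\log 2)/2=0.1534\ldots$ is the Dickman function evaluated at $2$ (the $2$ reflecting that the interval has length about $(x/p)\cdot p^{-(2-\alpha)}$), not the outcome of an optimization over $x$ or over backward-gap configurations. Your account of where $\tfrac{1-\log 2}{2}$ comes from — a trade-off between the offset $\delta$ and worst-case gaps — does not withstand scrutiny; no such optimization produces $\rho(2)$, and if smooth numbers were equidistributed in these short intervals (so that your $\phi$ really were smooth and the gap coefficient were $1$), the paper notes one would obtain the far stronger conclusion that the gap must be asymptotically at least $\log p$.

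The paper's actual argument also sidesteps your two-sided $\sup/\inf$ criterion, which you assert as an ``iff'' without proof and whose ``if'' direction is a strong claim you do not need. It uses that both $p$ and $q$ are popular, at parameters $x_n$ and $x_{n+1}$, to produce by a discrete intermediate-value argument an $x'$ between them with $\Psi(x'/p,p)=\Psi(x'/q,q)$ exactly; the Buchstab identity then forces the short-interval count to equal $\Psi(x'/q^2,q)\gg\frac{x'}{pq}\rho(u_0)\log p$, while \eqref{psiloc} bounds it by $(1+o(1))\frac{2-\alpha}{\rho(2-\alpha)}\cdot\frac{x'(q-p)}{pq}\rho(u_0)$, giving $q-p\geq(\rho(2)/2-o(1))\log p$ (Theorem \ref{spacingthm}). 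If you want to salvage your route, the missing ingredients are (i) the Buchstab identity to isolate the short-interval count and (ii) Hildebrand's short-interval theorem to bound it; the telescoping and the optimization over $x$ are not where the content of this theorem lies.
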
 \label{intro:gpy}
\noindent We then combine this with a consequence of the GPY sieve \cite{GPY} which shows that a positive proportion of prime gaps are smaller than that.

\begin{corollary} \label{intro:pospro}
A positive proportion of primes are not popular.
\end{corollary}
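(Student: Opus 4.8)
The plan is to combine Theorem~\ref{intro:gpy} with a density statement for small prime gaps coming from the GPY sieve. Write $p_n$ for the $n$th prime, $d_n = p_{n+1}-p_n$, and set
\[
S(x) = \#\{\, n : p_n \le x,\ d_n < 0.153\,\log p_n \,\}.
\]
By Theorem~\ref{intro:gpy}, every prime $p_n$ counted by $S(x)$ other than the finitely many that fail to be ``sufficiently large'' is not a popular prime, and distinct indices $n$ give distinct primes $p_n$ (each prime is the smaller member of exactly one consecutive pair), so these non-popular primes are not overcounted. Hence the corollary reduces to the assertion that $S(x)\gg\pi(x)$: once we know a positive proportion of consecutive prime gaps are shorter than $0.153\log p_n$, we get $\#\{p\le x : p\text{ not popular}\}\ge S(x)-O(1)\gg\pi(x)$.

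So the one real input is the following consequence of the GPY sieve \cite{GPY}: a positive proportion of $n$ satisfy $d_n < 0.153\log p_n$. It matters that this is stronger than the usual GPY statement $\liminf_n d_n/\log p_n = 0$. That statement, for each fixed $\eta>0$, locates just one small gap in each long dyadic interval, hence only $\gg\log x$ indices $n\le x$ with $d_n<\eta\log p_n$, whose proportion tends to $0$. To reach a positive proportion one works instead with a weighted sum of the shape
\[
\sum_{n\le x}\Bigl(\sum_{h\in\mathcal H}\theta(n+h)-\log 3x\Bigr)\Lambda_R(n;\mathcal H)^2,
\]
choosing the admissible set $\mathcal H$ and the sieve parameters optimally, and then comparing the size of this sum against the maximal size of a single summand --- equivalently, localizing the good $n$ to short intervals --- which yields $\gg\pi(x)$ values of $n$ for which $(p_n,p_n+c\log p_n]$ contains a further prime, with $c$ an explicit constant falling out of the optimization. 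The value of $c$ obtained this way is a fixed positive number rather than something tending to $0$, but it can be taken below the threshold $0.153$ of Theorem~\ref{intro:gpy}.

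I expect the only genuinely delicate point to be this interface between two explicit constants: the numerical output of the GPY-type optimization on one side and the constant behind Theorem~\ref{intro:gpy} on the other, which must be checked to be compatible --- indeed the appearance of the same number $0.153$ in the statement of Theorem~\ref{intro:gpy} suggests the threshold there was chosen precisely to meet what the sieve delivers. Past that, the argument is routine: converting ``a positive proportion of short gaps'' into ``a positive proportion of non-popular primes'' costs nothing, since each gap determines its smaller endpoint uniquely and the finitely many exceptional small primes cannot affect a density.
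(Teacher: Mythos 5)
Your proposal is correct and follows essentially the same route as the paper, which deduces the corollary in one line by citing the positive-proportion form of the Goldston--Pintz--Y{\i}ld{\i}r{\i}m result (valid for every fixed $\eta>0$, so no delicate matching of constants is actually needed) and combining it with the $0.153\log p$ criterion. Your additional remarks on why the bare liminf statement would not suffice, and on the injectivity of the map from gaps to their smaller endpoints, are accurate but not part of the paper's argument.
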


In Section \ref{data} we present data on the prime numbers which, for some value of $x\leq \times 10^{14}$, appear most frequently as the largest prime divisors of the integers in $[2,x]$.  We compare these values to other subsets of the prime numbers, in particular the ``convex primes,''  the set of those prime numbers numbers, $p_n$, which form the vertices of the boundary of the convex hull of the points $(n,p_n)$ in the plane, considered by Pomerance \cite{PNG} and recently by Tutaj \cite{Tu}. Within the range of our computations the convex primes are a subset of the popular primes. 

Finally we apply the methods developed in this paper to another problem which turns out to be closely related to ours, the analysis of the running time of factoring algorithms. A key step in several algorithms for factoring integers (including Dixon's
random squares algorithm, the quadratic sieve and the number field sieve) requires generating a pseudorandom sequence of integers $a_1,a_2,\ldots$ until a subset of the $a_i$'s has product equal to a
square.  Pomerance \cite{Pomsquares} notes that in the
(usually heuristic) analysis of these algorithms one can assume that
the pseudo-random sequence $a_1,a_2,\ldots$ is close enough to random
that one can make predictions using this assumption, and thus the analysis of this step of these algorithms can be captured by the following question.
\medskip

\noindent {\bf Pomerance's Problem.} Select positive integers
$a_1,a_2,\ldots \leq x$ independently at random (each integer is chosen with probability $1/x$)
until some subsequence of the $a_i$'s has product equal to a
square. When this occurs, we say that the sequence has a \textit{square dependence}. What is the expected stopping time of this process?
\medskip

Pomerance \cite{Pommultind} showed for any $\epsilon >0$ that as $x \to \infty$ the probability that this stopping time lies in the interval \[\left[\exp\left\{(1-\epsilon)\sqrt{2\log x \log \log x}\right\},\exp\left\{(1+\epsilon)\sqrt{2\log x \log \log x}\right\}\right]\] tends to 1.  Croot, Granville, Pemantle and Tetali \cite {CGPT} showed that the interval can be taken to be $\left[(\frac{\pi e^{-\gamma}}{4} -\epsilon)\frac{x}{h(x)},(e^{-\gamma} +\epsilon)\frac{x}{h(x)}\right]$ with the same result, where $h(x)$ is the maximum value of the function $\frac{\Psi(x,y)}{\pi(y)}$ taken over $y<x$.  (For simplicity, they find $y_0$ which maximizes $\frac{\psi(x,y)}{y}$, and then consider $\frac{\Psi(x,y_0)}{\pi(y_0)}$.)  They give only the same crude approximation \[\frac{x}{h(x)} = \exp\left\{(1+o(1))\sqrt{2\log x \log \log x}\right\}\] as Pomerance however.  In Section \ref{sec:fast} we analyze the values of $y$ which maximize both $\frac{\Psi(x,y)}{y}$ and $\frac{\Psi(x,y)}{\pi(y)}$, and give the following asymptotic for the function $h(x)$.

\begin{theorem} \label{intro:fastpeak}
For a given value of $x$, the value of $h(x)$, the maximum value of $\frac{\Psi(x,y)}{\pi(y)}$ for $y<x$ is given asymptotically by 
\begin{equation*}
h(x) = \frac{x}{\sqrt{2\pi\log x}}\exp\left\{-2\sqrt{\nu(x)\log x}+ \int_{0}^{\nu(x)}\frac{e^s {-}1}{s}ds+\frac{3\nu(x)}{2}+\gamma+O\left(\frac{1}{\log \log x}\right)\right\},
\end{equation*}
\end{theorem}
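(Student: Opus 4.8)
The plan is to deduce Theorem~\ref{intro:fastpeak} from Theorem~\ref{intro:modeheight} by showing that $h(x)$ and the mode height agree up to a factor $\exp\{O(1/\log\log x)\}$, which is absorbed into the error term already present in~\eqref{intromodeheight}. Fix a prime $p_0=p_0(x)$ that is popular on $[2,x]$, so that $\Psi(x/p_0,p_0)=\max_p\Psi(x/p,p)$ is the mode height, evaluated by Theorem~\ref{intro:modeheight}. The bridge between the two maxima is the identity
\begin{equation*}
\Psi(x,y)=1+\sum_{p\le y}\Psi(x/p,p)\qquad(y\ge2),
\end{equation*}
obtained by sorting the $y$-smooth integers in $[2,x]$ according to their largest prime factor $p=P(n)$, since $n/p$ is then a $p$-smooth integer in $[1,x/p]$. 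From this identity the upper bound is immediate: each summand is at most $\Psi(x/p_0,p_0)$, so $\Psi(x,y)/\pi(y)\le1+\Psi(x/p_0,p_0)$ for every $y\in[2,x)$, and since $\Psi(x/p_0,p_0)=x^{1-o(1)}\to\infty$ this already gives $\log h(x)\le\log\Psi(x/p_0,p_0)+o(1/\log\log x)$.

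For the matching lower bound one exploits the flatness of the map $p\mapsto\Psi(x/p,p)$ near $p_0$. The analysis carried out in Section~\ref{results} to prove Theorem~\ref{intro:mode} shows that, as $p$ ranges over primes with $\lvert\log p-\log p_0\rvert\le\log\log x$, the quantity $\log\Psi(x/p,p)$ stays within $O(1/\log\log x)$ of its maximal value $\log\Psi(x/p_0,p_0)$ (this window being wide enough precisely because the curvature of $\log\Psi(x/e^{t},e^{t})$ at its peak $t_0=\log p_0$ is as small as $O(1/\sqrt{\log x})$, while the Hildebrand--Tenenbaum saddle-point formula approximates this quantity with relative error only $O(\sqrt{\log\log x/\log x})$). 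Applying the identity with the real argument $y=p_0e^{\log\log x}$, which is $<x$, and retaining only the primes of this window,
\begin{equation*}
\frac{\Psi(x,y)}{\pi(y)}\ \ge\ \bigl(1-O(1/\log\log x)\bigr)\,\Psi(x/p_0,p_0)\cdot\frac{\pi(p_0e^{\log\log x})-\pi(p_0e^{-\log\log x})}{\pi(p_0e^{\log\log x})},
\end{equation*}
and by the prime number theorem the last quotient equals $1-(1+o(1))(\log x)^{-2}$. Hence $h(x)\ge(1-O(1/\log\log x))\,\Psi(x/p_0,p_0)$, and combining this with the upper bound yields $\log h(x)=\log\Psi(x/p_0,p_0)+O(1/\log\log x)$; Theorem~\ref{intro:modeheight} now gives exactly the asserted formula. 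The same reasoning locates the value of $y$ maximizing $\Psi(x,y)/\pi(y)$, and likewise that maximizing $\Psi(x,y)/y$, within $\log\log x$ of $\log p_0$.

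The step I expect to be the main obstacle is the lower bound's reliance on a \emph{quantitative, uniform} flatness estimate: one must control the curvature of $\log\Psi(x/p,p)$ at its peak together with the fluctuation of $\Psi(x/p,p)$ from one prime to the next, and the error term in the saddle-point formula, finely enough that the total variation of $\log\Psi(x/p,p)$ over the window $\lvert\log p-\log p_0\rvert\le\log\log x$ is held to $O(1/\log\log x)$. All of the required input is, however, already developed in Section~\ref{results}. An alternative route is to rederive the asymptotic for $h(x)$ directly: differentiate the saddle-point expression $\Psi(x,y)\sim x^{\alpha}\zeta(\alpha,y)/(\alpha\sqrt{2\pi\phi_2})$ with respect to $\log y$, using the defining relation $\sum_{p\le y}(\log p)/(p^{\alpha}-1)=\log x$ of the saddle point $\alpha=\alpha(x,y)$ to cancel the dominant terms, and solve the resulting critical-point equation for $\log y$; but the reduction via the identity above spares one from repeating that computation.
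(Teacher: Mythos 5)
Your argument is correct, but it is a genuinely different route from the paper's. The paper proves this theorem by rerunning the saddle-point optimization of Section \ref{results} for the function $\Psi(x,y)/\pi(y)$ directly: it replaces $u=\frac{\log x}{\log p}-1$ by $u=\frac{\log x}{\log p}$, introduces the modified implicit function $\omega(x)$ satisfying $e^{\omega(x)}=1+\sqrt{\omega(x)\log x}$, locates the maximizing $y$ to second order (Theorem \ref{thm:precfast}), evaluates the maximum by repeating the computation of Theorem \ref{cor:peakheight}, and finally converts $\omega(x)$ back to $\nu(x)$ via \eqref{wvsqrtlogterm} and an estimate for $\int_{\nu}^{\omega}\frac{e^s-1}{s}\,ds$. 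You instead sandwich $h(x)$ against the mode height $C(x)=\Psi(x/p_0,p_0)$ using the identity $\Psi(x,y)=1+\sum_{p\le y}\Psi(x/p,p)$ (a summed form of the paper's Lemma \ref{genbuch}): the upper bound $h(x)\le 1+C(x)$ is immediate, and the lower bound follows because \eqref{combinedrat} shows the peak is flat — $\Psi(x/p,p)=C(x)\bigl(1+O(\nu(x)^{3/2}/\sqrt{\log x})\bigr)$ uniformly for $|\log p-\log p_0|\le\log\log x$, a window containing all but a fraction $(1+o(1))(\log x)^{-2}$ of the primes up to $y=p_0e^{\log\log x}$. This gives $h(x)=C(x)(1+O(1/\log\log x))$, which is exactly the paper's relation \eqref{fastvsmode}, obtained there only a posteriori; your derivation is shorter, explains structurally why the two maxima coincide to this accuracy, and avoids the $\omega(x)\leftrightarrow\nu(x)$ bookkeeping. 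What it does not deliver is the precise location of the maximizing $y$ (the paper pins $\log y$ down to $\sqrt{\nu(x)\log x}+\frac54+O(1/\log\log x)$, whereas your method localizes it only to within $O(\log\log x)$), so it could not replace Theorem \ref{thm:precfast}; but for the value $h(x)$ itself it is a complete and valid proof, given Theorem \ref{intro:modeheight} and the uniformity of \eqref{combinedrat} over $|c|\le 2$, both of which are indeed established in Section \ref{results}.
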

\noindent the same expression as \eqref{intromodeheight}.

\section{Smooth Numbers} \label{background}

These results rely on careful estimates for the counts of smooth numbers, those integers whose prime factors are all less than some bound.  In particular a number is called $y$-smooth if all of its prime factors are at most $y$.  We will denote by $\Psi(x,y)$ the count of the $y$-smooth numbers up to $x$.  We are specifically interested in the count of the number of integers up to $x$ whose largest prime factor is the prime $p$.  This count is given by $\Psi\left(\tfrac{x}{p},p\right)$ since each integer up to $x$ whose largest prime divisor is $p$ can be written uniquely as $p$ times a $p$-smooth number that is at most $x/p$.  

The function $\Psi(x,y)$ has been well studied over the course of the last century. From Hildebrand \cite{hild} we know that for each $\epsilon >0$, $x>2$ and $\exp\left((\log \log x)^{5/3+\epsilon}\right)<y<x$, \begin{equation}
\Psi(x,y) =  x\rho(u)\left(1+O_\epsilon\left(\frac{\log(u+1)}{\log y}\right)\right) \label{hildapprox}
\end{equation}
where \begin{equation*}u = \frac{\log x}{\log y}\end{equation*} and $\rho(u)$, the Dickman rho function, is the continuous solution to the differential delay equation \begin{equation} \label{dickmanfneq}
u\rho'(u)+\rho(u-1)=0
\end{equation} with the initial condition $\rho(u)=1$, $(0 \leq u \leq 1)$.
It was shown by Alladi \cite{alladi} that as $u \to \infty$,
\begin{equation}
\rho(u) = \left(1+O\left(\frac{1}{u}\right)\right)\sqrt{\frac{\xi'(u)}{2\pi}} \exp\left\{\gamma -u\xi(u) + \int_{0}^{\xi(u)}\frac{e^s -1}{s}ds\right\}. \label{alladiapprox}
\end{equation}
Here $\gamma$ is the Euler-Mascheroni constant and $\xi(u)$ denotes the unique positive solution to the equation
\begin{equation} \label{xi}
e^{\xi(u)} = 1+u\xi(u)
\end{equation}
which is given approximately by \begin{equation}\xi(u) = \log u + \log \log u +O\left(\frac{\log \log u}{\log u}\right).\end{equation}
It will be useful later to note that 
\begin{equation}
\int_0^u \xi(t) dt = u\xi(u) - \int_0^{\xi(u)} \frac{e^s-1}{s}ds. \label{xiint}
\end{equation}

Saias \cite{saias} gives an approximation for $\Psi(x,y)$ which, while better than Hildebrand's result, is somewhat more cumbersome to work with.  Defining \begin{equation*}\Lambda(x,y) = \begin{cases} \displaystyle{x\int_0^x \rho\left(\frac{\log x - \log t}{\log y}\right)d\frac{\lfloor t \rfloor }{t}} & x \notin \mathbb{Z} \\ 
\displaystyle{\lim_{z\to x^-}\Lambda(z,y)} & x \in \mathbb{Z},\end{cases}\end{equation*}  then the approximation \begin{equation*}
\Psi(x,y) = \Lambda(x,y)\left(1+O_\epsilon\left(\frac{1}{\exp\left((\log y)^{3/5 - \epsilon}\right)}\right)\right) \label{SaiasApprox}
\end{equation*}
holds in the same range as Hildebrand's result.  Assuming the Riemann Hypothesis, this can be improved to 
\begin{equation*}\Psi(x,y) = \Lambda(x,y)\left(1+O_\epsilon\left(\frac{\log x}{ y^{1/2 - \epsilon}}\right)\right).\end{equation*}
Saias also shows that the asymptotic expansion\begin{equation}
\Lambda(x,y) = x\sum_{j=0}^k a_j \frac{\rho^{(j)}(u)}{(\log y)^j} + O_{k,\epsilon}\left(x\frac{\rho^{(k+1)}(u)}{(\log y)^{k+1}}\right) \label{LambdaExpansion}
\end{equation} where the $a_j$ are the coefficients for the Taylor series of $(s-1)\zeta(s)/s$ around $s=1$, holds uniformly for $x \geq 2$, $(\log x)^{1+\epsilon}<y\leq x$ as long as \begin{equation*}\frac{u-j}{k+1-j}\geq \frac{\log \log y}{\log y}\end{equation*} for $0\leq j \leq \min(k,u)$.  We will use extensively Saias' expansion in the case $k=1$.  In particular, the constants $a_0$ and $a_1$ are given by $a_0=1$ and $a_1= \gamma-1$ so that if we define
\begin{equation*}
\kappa(x,y) = \rho(u) + (\gamma-1)\frac{\rho'(u)}{\log y} 
\end{equation*}
then the approximation
\begin{equation}
\Psi(x,y) = x\kappa(x,y)\left(1+O_\epsilon\left(\left(\frac{\log (u+1) }{\log y}\right)^2\right)\right) \label{saiasappone}
\end{equation}
holds in the same range as \eqref{hildapprox}.

In order to make use of Saias' improved approximation we will also require a better approximation of $\rho(u)$.  Both Smida \cite{smida} and Xuan \cite{xuan} have given improved approximations in which the $\left(1+O\left(\frac{1}{u}\right)\right)$ is replaced by a series involving negative powers of $u$ and $\xi(u)$.  Xuan shows that for any fixed integer $N$,
\begin{equation}
\rho(u) = \sqrt{\frac{\xi'(u)}{2\pi}} \exp\left\{\gamma -u\xi(u) + \int_{0}^{\xi(u)}\frac{e^s -1}{s}ds\right\}\left(1+\sum_{i=1}^N \frac{1}{u^i}\sum_{j=0}^\infty \frac{b_{i,j}}{\xi(u)^j} + O_N\left(\frac{1}{u^{N+1}}\right)\right)\label{xuaneq}
\end{equation}
where the $b_{i,j}$ are constants and the series is uniformly convergent.  We will only be using his result in the case that $N=1$.  Smida's work, which is done in greater generality for a family of differential difference equations like Dickman's function, shows that $b_{1,0}= -\frac{1}{12}$.

Finally, Hildebrand \cite[Theorem 3]{H85} gives an upper bound for the number of smooth integers in short intervals which we will useful.  Uniformly for $x>y>2$, $1 \leq z \leq x$ we have 
\begin{align}
\Psi\left(x+z,y\right) -\Psi(x,y) \leq\left(1+O\left(\frac{1}{\log y}\right)\right) \frac{\Psi(x,y)y\log(xy/z)}{\Psi(xy/z,y)\log y}.  \label{psiloc}
\end{align}

\section{Dickman's Function}
The approximation
\begin{equation}\frac{\rho(u-1)}{\rho(u)} = u\xi(u)\left(1+O\left(\frac{1}{u}\right)\right)\label{simpshift} \end{equation}
is common in the literature. (See for example \cite[Section III.5 Corollary  8.3]{tenenbaum}.)  We will need a slightly stronger form obtained using the work of Smida and Xuan.

\begin{lemma} \label{shift} For $u \geq 1$ and any $v \ll 1$ the function $\rho(u)$ satisfies 

\begin{equation}\frac{\rho(u+v)e^{v\xi(u)}}{\rho(u)}=1-\frac{v}{2u}\left(1+\frac{v\xi(u)}{\xi(u)-1} + \frac{1}{\left(\xi(u)-1\right)^2}\right)  +O\left(\frac{1}{u^2}\right).\end{equation}
In particular when $v=-1$,
\begin{equation}\frac{\rho(u-1)}{\rho(u)}=u\xi(u)+\frac{1}{2}+\frac{1}{2(\xi(u)-1)^2} +O\left(\frac{\xi(u)}{u}\right).\end{equation}
\end{lemma}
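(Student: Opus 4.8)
The plan is to feed Xuan's refinement \eqref{xuaneq} of Alladi's formula \eqref{alladiapprox}, in the case $N=1$, through the identity \eqref{xiint} and the defining relation \eqref{xi} for $\xi$. Since $\rho$ and $\xi$ are smooth, the asserted estimates are trivial for $u$ in any bounded range, so I take $u$ large. Write Xuan's formula as $\rho(u)=A(u)B(u)$ with
\[
A(u)=\sqrt{\tfrac{\xi'(u)}{2\pi}}\exp\Bigl\{\gamma-u\xi(u)+\int_0^{\xi(u)}\tfrac{e^s-1}{s}\,ds\Bigr\},\qquad B(u)=1+\tfrac1u\sum_{j\ge0}\tfrac{b_{1,j}}{\xi(u)^j}+O(u^{-2}).
\]
Then, since the integral-exponents of $A(u+v)$ and $A(u)$ combine to a single integral $\int_{\xi(u)}^{\xi(u+v)}\tfrac{e^s-1}{s}\,ds$, the quantity $\rho(u+v)e^{v\xi(u)}/\rho(u)$ factors as
\[
\sqrt{\frac{\xi'(u+v)}{\xi'(u)}}\cdot\exp\Bigl\{v\xi(u)-(u+v)\xi(u+v)+u\xi(u)+\int_{\xi(u)}^{\xi(u+v)}\tfrac{e^s-1}{s}\,ds\Bigr\}\cdot\frac{B(u+v)}{B(u)},
\]
and I would estimate these three factors in turn.

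For the exponential factor, \eqref{xiint} collapses the bracket to $v\xi(u)-\int_u^{u+v}\xi(t)\,dt=-\int_u^{u+v}(\xi(t)-\xi(u))\,dt$; Taylor expanding $\xi$ about $u$, together with the bounds $\xi^{(k)}(u)=O(u^{-k})$ obtained by repeatedly differentiating \eqref{xi}, gives $-\tfrac{v^2}{2}\xi'(u)+O(u^{-2})$. Differentiating \eqref{xi} yields $\xi'(u)=\xi(u)/(1+u(\xi(u)-1))=\tfrac{\xi(u)}{u(\xi(u)-1)}+O(u^{-2})$, so this factor is $1-\tfrac{v^2\xi(u)}{2u(\xi(u)-1)}+O(u^{-2})$. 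For the square-root factor, $\log\tfrac{\xi'(u+v)}{\xi'(u)}=\int_u^{u+v}\tfrac{\xi''(t)}{\xi'(t)}\,dt$; logarithmic differentiation of $\xi'(u)=\xi(u)/(1+u(\xi(u)-1))$ followed by a careful expansion of the $\xi$-dependent lower-order parts gives $\tfrac{\xi''(u)}{\xi'(u)}=-\tfrac1u-\tfrac1{u(\xi(u)-1)^2}+O(u^{-2})$, hence the factor equals $1-\tfrac{v}{2u}-\tfrac{v}{2u(\xi(u)-1)^2}+O(u^{-2})$. Finally $B(u+v)/B(u)=1+O(u^{-2})$: the series $\sum_j b_{1,j}\xi(u)^{-j}$ is uniformly convergent, hence bounded, and has derivative $O(u^{-1}\xi(u)^{-2})$, so the leading $1/u$ terms of $B(u+v)$ and $B(u)$ cancel to within $O(u^{-2})$; in particular Xuan's and Smida's lower-order constants do not survive into the answer. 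Multiplying the three factors and discarding the $O(u^{-2})$ cross terms produces the first displayed identity.

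For $v=-1$, substitution and the simplification $1-\tfrac{\xi}{\xi-1}+\tfrac1{(\xi-1)^2}=\tfrac{2-\xi}{(\xi-1)^2}$ give $\tfrac{\rho(u-1)e^{-\xi(u)}}{\rho(u)}=1+\tfrac{2-\xi(u)}{2u(\xi(u)-1)^2}+O(u^{-2})$; multiplying by $e^{\xi(u)}=1+u\xi(u)$ via \eqref{xi} and using $1+\tfrac{\xi(2-\xi)}{2(\xi-1)^2}=\tfrac12+\tfrac1{2(\xi-1)^2}$ yields the stated formula, the error $O(\xi(u)/u)$ coming from the term $u\xi(u)\cdot O(u^{-2})$. (As a sanity check this refines the usual \eqref{simpshift}, which only has relative error $O(1/u)$.)

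I expect the main obstacle to be the error bookkeeping in the square-root factor: the estimate one gets immediately, $\xi''(u)/\xi'(u)=-1/u+O(1/(u\log u))$, is not accurate enough to leave an $O(u^{-2})$ remainder, so one must peel off the secondary term $-1/(u(\xi(u)-1)^2)$ explicitly, which forces one to track the $\xi$-dependent lower-order corrections to both $\xi'(u)$ and $1+u(\xi(u)-1)$ at the same time. Once \eqref{xiint} has been used to collapse the exponent, everything else is routine Taylor expansion.
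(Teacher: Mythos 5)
Your proposal is correct and follows essentially the same route as the paper: Xuan's expansion with $N=1$, the identity \eqref{xiint} to collapse the exponent to $-\int_u^{u+v}\xi(t)\,dt$, explicit second-order expansions of $\xi'$ and $\xi''/\xi'$ from implicit differentiation of \eqref{xi}, and the observation that the Xuan correction factors cancel to within $O(u^{-2})$; the algebra for $v=-1$ also checks out. The only cosmetic difference is that you phrase the square-root and exponential factors via integrals of $\xi''/\xi'$ and $\xi(t)-\xi(u)$ where the paper Taylor-expands directly, which changes nothing of substance.
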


\begin{proof}
By implicit differentiation of the functional equation $e^{\xi(u)} = 1 +u\xi(u)$ we find that
\begin{align}
\xi'(u) = \frac{\xi(u)}{u\xi(u) -u +1} &=\frac{1}{u}\left(\frac{\xi(u)}{\xi(u) -1}\right)+ O\left(\frac{1}{u^2\xi(u)}\right), \label{xiprime}
\end{align}
that 
\begin{align}\xi''(u) &= \frac{2\xi'(u)-e^{\xi(u)}\xi'(u)^2}{u\xi(u) -u +1}\nonumber \\
 &= \frac{2\xi(u)}{(u\xi(u) -u +1)^2}-\frac{u\xi(u)^3+\xi(u)^2}{(u\xi(u) -u +1)^3} = -\frac{1}{u^2} + O\left(\frac{1}{u^2\xi(u)}\right),\end{align}
and that $\xi'''(u) \sim 1/u^3$. Also,
\begin{align}
\frac{\xi''(u)}{\xi'(u)} &= \frac{2-e^{\xi(u)}\xi'(u)}{\xi(u)-u+1}  = \frac{2}{u\xi(u)-u+1}-  \frac{u\xi(u)^2+\xi(u)}{(u\xi(u)-u+1)^2} \nonumber \\
&= \frac{2}{u}\left(\frac{1}{\xi(u)-1}\right) - \frac{1}{u}\left(\frac{\xi(u)^2}{\left(\xi(u)-1\right)^2}\right)+ O\left(\frac{1}{u^2\xi(u)}\right)\nonumber \\
&=-\frac{1}{u}\left(1 + \frac{1}{\left(\xi(u)-1\right)^2}\right) +O\left(\frac{1}{u^2\xi(u)}\right). \label{xidp}
\end{align}

Now, using equation \eqref{xuaneq} with $N=1$, along with \eqref{xiint} and the approximation 
\begin{align*}
\int_{u}^{u+v} \xi(t)dt = v\xi(u) +\frac{v^2\xi'(u)}{2} + O\left(\left|\xi''(u)\right|\right) 
\end{align*}
we find that 
\begin{align*}
\rho(u+v) &= \rho(u)\sqrt{\frac{\xi'(u+v)}{\xi'(u)}}\exp\left\{-\int_{u}^{u+v} \xi(t)dt\right\}\left(\frac{1+\frac{1}{u+v}\sum_{j=0}^\infty \frac{b_{1,j}}{\xi(u+v)^j} + O\left(\frac{1}{u^2}\right)}{1+\frac{1}{u}\sum_{j=0}^\infty \frac{b_{1,j}}{\xi(u)^j} + O\left(\frac{1}{u^2}\right)}\right)\nonumber \\
&= \rho(u) \sqrt{1 + v\frac{\xi''(u)}{\xi'(u)} + O\left(\frac{\xi'''(u)}{\xi'(u)}\right)}\exp\left\{-v\xi(u) -\frac{v^2\xi'(u)}{2} + O\left(\left|\xi''(u)\right|\right) \right\}\left(1{+}O\left(\frac{1}{u^2}\right)\right)\nonumber \\
&=\rho(u) e^{-v\xi(u)} \left(1 + \frac{v\xi''(u)}{2\xi'(u)} -\frac{v^2\xi'(u)}{2} + O\left(\frac{1}{u^2}\right)\right).
\end{align*}
Here we have used the Taylor expansions for $\sqrt{1+x}$ and $e^x$ around $x=0$.  Finally, using equations \eqref{xiprime} and \eqref{xidp} we have 
\begin{align*}
\rho(u+v)&=\rho(u) e^{-v\xi(u)}\left(1-\frac{v}{2u}\left(1+ \frac{1}{\left(\xi(u)-1\right)^2}\right) -\frac{v^2}{2u}\left(\frac{\xi(u)}{\xi(u)-1}\right) +O\left(\frac{1}{u^2}\right)\right)\nonumber \\
&=\rho(u) e^{-v\xi(u)}\left(1-\frac{v}{2u}\left(1+\frac{v\xi(u)}{\xi(u)-1} + \frac{1}{\left(\xi(u)-1\right)^2}\right)  +O\left(\frac{1}{u^2}\right)\right).
\end{align*}
In the specific case $v=-1$, we have that $e^{\xi(u)} = 1 + u\xi(u)$, and so
\begin{align*}
\rho(u-1) &=\rho(u) \left(u\xi(u) +1\right)\left(1+\frac{1}{2u}\left(1-\frac{\xi(u)}{\xi(u)-1} + \frac{1}{\left(\xi(u)-1)^2\right)}\right)  +O\left(\frac{1}{u^2}\right)\right)\nonumber \\
&=\rho(u) \left(u\xi(u) +1+\frac{\xi(u)}{2}\left(-\frac{1}{\xi(u)-1} + \frac{1}{\left(\xi(u)-1)^2\right)}\right)  +O\left(\frac{\xi(u)}{u}\right)\right)\nonumber \\
&=\rho(u) \left(u\xi(u)+\frac{1}{2}+\frac{1}{2(\xi(u)-1)^2} +O\left(\frac{\xi(u)}{u}\right)\right).
\end{align*}

\end{proof} 

We can use Lemma \ref{shift} to obtain a good approximation for the derivative of $\rho(u)$.

\begin{lemma} \label{rhodlemma}
For $u \geq 1$ we have
\begin{equation}
\rho'(u) = -\rho(u) \left(\xi(u)+\frac{1}{2u}\left(1+\frac{1}{(\xi(u)-1)^2}\right) +O\left(\frac{\xi(u)}{u^2}\right)\right). \label{rhodeq}
\end{equation}
\end{lemma}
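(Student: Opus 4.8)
The plan is to extract $\rho'(u)$ from the functional equation $u\rho'(u) + \rho(u-1) = 0$ — equivalently $\rho'(u) = -\rho(u-1)/u$ — and then substitute the refined ratio $\rho(u-1)/\rho(u)$ from the $v = -1$ case of Lemma~\ref{shift}. First I would write
\[
\rho'(u) = -\frac{\rho(u-1)}{u} = -\frac{\rho(u)}{u}\cdot\frac{\rho(u-1)}{\rho(u)},
\]
and then plug in $\frac{\rho(u-1)}{\rho(u)} = u\xi(u) + \tfrac12 + \tfrac{1}{2(\xi(u)-1)^2} + O\!\left(\tfrac{\xi(u)}{u}\right)$. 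Dividing through by $u$ turns the leading $u\xi(u)$ into $\xi(u)$, the constant $\tfrac12$ into $\tfrac{1}{2u}$, the $\tfrac{1}{2(\xi(u)-1)^2}$ term into $\tfrac{1}{2u(\xi(u)-1)^2}$, and the error $O(\xi(u)/u)$ into $O(\xi(u)/u^2)$. Collecting the two middle terms as $\tfrac{1}{2u}\left(1 + \tfrac{1}{(\xi(u)-1)^2}\right)$ gives exactly the claimed formula~\eqref{rhodeq}, so the derivation is essentially a one-line substitution.

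The only subtlety worth checking is that the functional equation~\eqref{dickmanfneq}, and hence the identity $\rho'(u) = -\rho(u-1)/u$, is valid throughout the range $u \ge 1$ claimed in the lemma. For $u > 1$ this is immediate from~\eqref{dickmanfneq}; at the single point $u = 1$ one should interpret $\rho'(1)$ as the left-hand derivative (since $\rho \equiv 1$ on $[0,1]$), and the Dickman equation gives $\rho'(1^-)\cdot 1 + \rho(0) = 0$, i.e. $\rho'(1^-) = -1$, consistent with $-\rho(0)/1$. One then just notes that the right-hand side of~\eqref{rhodeq} is also well-behaved there, so the statement holds uniformly for $u \ge 1$ with the implied constants absolute.

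I do not anticipate a genuine obstacle here: the real work was already done in Lemma~\ref{shift}, and this lemma is a direct corollary. If anything, the only thing to be careful about is bookkeeping of the error term — one must confirm that dividing the $O(\xi(u)/u)$ error in the $\rho(u-1)/\rho(u)$ expansion by $u$ indeed yields $O(\xi(u)/u^2)$ and that this dominates any secondary errors introduced (there are none, since no further approximations are made). So the proof is short: state $\rho'(u) = -\rho(u-1)/u$ from~\eqref{dickmanfneq}, substitute the $v=-1$ formula from Lemma~\ref{shift}, divide by $u$, and regroup.
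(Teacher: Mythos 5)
Your proposal is correct and is exactly the paper's argument: apply $\rho'(u)=-\rho(u-1)/u$ from \eqref{dickmanfneq}, substitute the $v=-1$ case of Lemma \ref{shift}, and divide by $u$ to regroup the terms and the error. Nothing further is needed.
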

\begin{proof}
Using the differential difference equation for $\rho(u)$ and Lemma \ref{shift} we have
\begin{equation*}\rho'(u) = -\frac{1}{u}\rho(u-1) = -\rho(u)\left(\xi(u)+\frac{1}{2u}\left(1+\frac{1}{(\xi(u)-1)^2}\right) +O\left(\frac{\xi(u)}{u^2}\right)\right).\end{equation*}
\end{proof}



\section{The most popular largest prime divisor} \label{results}

For $x \geq 2$ we say that a prime $p$ is popular on the interval $[2,x]$ if no prime occurs more frequently than $p$ as the largest prime divisor of the integers in that interval.  In the case of a tie we will say that any prime which occurs a maximal number of times is popular. The following theorem, Theorem \ref{intro:mode} in the introduction, makes use of Saias' approximation \eqref{saiasappone}.  In particular, this result implies that for each $\epsilon >0$, $x> 4$, $p\geq 2$ and $\exp\left((\log \log x)^{5/3+\epsilon}\right)<p<\frac{x}{p}$, \begin{equation}
\psi\left(\frac{x}{p},p\right) = \frac{x}{p} \kappa\left(\frac{x}{p},p\right)\left(1 + O_{\epsilon}\left(\left(\frac{\log \log x}{\log p}\right)^{2}\right)\right). \label{psixp}
\end{equation} 


\begin{theorem} \label{llqest}
If the prime $p$ is popular on the interval $[2,x]$ then $p$ satisfies 
\begin{equation} 
p = \exp\left\{ \sqrt{\nu(x) \log x }+\frac{1}{4}\left(1-\frac{\nu(x)-3}{2\nu(x)^2 -3\nu(x)+1}\right)\right\}\left(1+ O\left(\left(\frac{\log \log x}{\log x}\right)^{1/4}\right)\right)\label{massympt} \end{equation}
where $\nu(x)$ is the solution to the implicitly defined equation $\nu(x) = \xi\left(\sqrt{\frac{\log x}{\nu(x)}} -1 \right)$
and is given approximately by
\begin{equation}
\nu(x) = \tfrac{1}{2}\log \log x + \tfrac{1}{2}\log \log \log x -\tfrac{1}{2}\log 2 + o(1) 
\end{equation} as $x \to \infty$.
\end{theorem}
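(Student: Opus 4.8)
The plan is to replace the discrete problem of finding the prime $p$ maximizing $\Psi(x/p,p)$ by the continuous problem of maximizing an explicit smooth function of $t=\log p$, to locate that maximum by calculus, and then to control the loss incurred in passing back to primes. \emph{Step 1 (reduction to a smooth optimization).} Write $L=\log x$ and, for real $t$ playing the role of $\log p$, set $u=u(t)=L/t-1$. A crude localization --- De Koninck's bound \eqref{koninckmode}, or else a direct comparison of $\Psi(x/p,p)$ for $p$ near the ends of its admissible range with its value at a prime near $e^{\sqrt{\nu(x)\log x}}$ --- shows the maximizing prime satisfies $\log p=(\log x)^{1/2+o(1)}$, hence lies well inside the range $\exp((\log\log x)^{5/3+\epsilon})<p<\sqrt{x}$ where \eqref{psixp} applies. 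Inserting \eqref{psixp} and expanding the logarithm of $\kappa$ gives
\[ \log\Psi(x/p,p)=\phi(\log p)+O\left(\left(\tfrac{\log\log x}{\log p}\right)^{2}\right),\qquad \phi(t)=L-t+\log\rho(u)+(\gamma-1)\frac{\rho'(u)}{t\,\rho(u)}, \]
where $\phi$ is smooth and, by Lemma \ref{rhodlemma}, has an explicit derivative; in the relevant range the error term is $O(\log\log x/\log x)$, and we write $\delta=\delta(p)$ for it.

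\emph{Step 2 (maximizing $\phi$).} Using $u'(t)=-L/t^{2}$ and $\rho'(u)/\rho(u)=-\xi(u)-\tfrac{1}{2u}(1+(\xi(u)-1)^{-2})+O(\xi(u)/u^{2})$ from Lemma \ref{rhodlemma},
\[ \phi'(t)=-1+\xi(u)\frac{L}{t^{2}}+\frac{L}{2ut^{2}}\left(1+\frac{1}{(\xi(u)-1)^{2}}\right)+(\text{lower order}). \]
The leading critical-point equation $\xi(u)L/t^{2}=1$ together with $u=L/t-1$ is made self-consistent by putting $\nu:=\xi(u)$ at the solution: then $t^{2}=\nu L$, so $u=\sqrt{L/\nu}-1$ and $\nu=\xi(\sqrt{\log x/\nu}-1)$, which is precisely the implicitly defined $\nu(x)$; the leading value of the maximizer is $t_{0}=\sqrt{\nu\log x}$. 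Writing the true maximizer as $t^{*}=t_{0}+s$ and solving $\phi'(t^{*})=0$ to first order yields $s=-h(t_{0})/\phi''(t_{0})$, where $h(t_{0})=\tfrac{1}{2t_{0}}(1+(\nu-1)^{-2})+(\text{lower order})$ is the subleading term of $\phi'$ at $t_{0}$ and, by a similar computation from \eqref{xiprime}, \eqref{xidp} and \eqref{rhodeq}, $\phi''(t_{0})=-\tfrac{2}{t_{0}}-\tfrac{1}{(\nu-1)t_{0}}+(\text{lower order})$. Using $u_{0}=t_{0}/\nu-1$ and $2\nu^{2}-3\nu+1=(2\nu-1)(\nu-1)$, the $t_{0}$-dependence cancels to leading order and
\[ s=\frac{\nu^{2}-2\nu+2}{2(2\nu^{2}-3\nu+1)}+(\text{error})=\frac{1}{4}\left(1-\frac{\nu-3}{2\nu^{2}-3\nu+1}\right)+(\text{error}), \]
which is the bracketed correction in \eqref{massympt}.

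\emph{Step 3 (back to primes, and the error term).} Throughout a neighbourhood of $t^{*}$ one has $\phi''(t)\asymp-1/\log p$, so $\phi(t)\le\phi(t^{*})-c(t-t^{*})^{2}/\log p$ there. There is a prime $p_{0}$ with $|\log p_{0}-t^{*}|$ far smaller than the prime gaps near $e^{t^{*}}$, so $\Psi(x/p_{0},p_{0})=e^{\phi(t^{*})}(1+O(\delta))$; since the mode prime $p$ maximizes $\Psi(x/p,p)$, we get $\phi(\log p)\ge\phi(t^{*})-O(\delta)$ and hence $(\log p-t^{*})^{2}\ll\delta\log p$. Feeding in the value of $t^{*}$ from Step 2, together with the size of $\delta$ --- sharpened by using Hildebrand's short-interval bound \eqref{psiloc} to control how much $\Psi(x/p,p)$ can fluctuate about its main term between consecutive primes --- gives $\log p=\sqrt{\nu\log x}+\tfrac{1}{4}(1-\tfrac{\nu-3}{2\nu^{2}-3\nu+1})+O((\log\log x/\log x)^{1/4})$, which is \eqref{massympt}. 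Finally, bootstrapping the defining relation $e^{\nu}=1+\sqrt{\nu\log x}-\nu$ (so $\nu\approx\tfrac{1}{2}\log(\nu\log x)$, then iterating) yields $\nu(x)=\tfrac{1}{2}\log\log x+\tfrac{1}{2}\log\log\log x-\tfrac{1}{2}\log 2+o(1)$.

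\emph{Main obstacle.} The delicate part is Step 3. Because the peak of $\phi$ is extremely flat --- its curvature is only $\asymp1/\log p$ --- pinning $\log p$ down to the stated precision forces one to match the error in Saias' smooth-number approximation \eqref{saiasappone} against that curvature and to control the oscillation of $\Psi(x/p,p)$ about its main term from one prime to the next. Step 2, though routine, is long, and the clean constant $\tfrac{1}{4}$ emerges only after the self-consistent implicit equation for $\nu$ is used to cancel the spurious $t_{0}$-dependence.
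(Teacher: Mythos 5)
Your proposal is correct and follows essentially the same route as the paper: a rough localization by elementary bounds on $\Psi$, followed by a second-order analysis of the Saias-approximated count near $P_0=\exp\{\sqrt{\nu(x)\log x}\}$ (the paper maximizes a quadratic in the shift parameter $c$ with $s=\exp\{\sqrt{\nu(x)\log x}+c\nu(x)\}$, which is exactly your Newton step $s=-h(t_0)/\phi''(t_0)$), yielding the same constant $\tfrac{1}{4}\left(1-\tfrac{\nu-3}{2\nu^2-3\nu+1}\right)$. The only notable difference is in your Step 3: the paper's return to primes is one-sided (the largest prime below the optimum automatically satisfies $\Psi(x/p',p')\ge\Psi(x/P_0,P_0)$, and every prime outside the window is shown to be beaten by it), so Hildebrand's short-interval bound \eqref{psiloc} is not actually needed for this theorem.
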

\begin{proof}

By using the functional equation \eqref{xi} for $\xi(u)$, we can rewrite the equation for $\nu(x)$ as the solution to \begin{equation} e^{\nu(x)} = 1 + \sqrt{\nu(x)\log x}-\nu(x)\label {nufunceq} \end{equation}
which can be approximated using standard asymptotic techniques to yield the rough approximation above.  

The proof proceeds in three steps, each giving better bounds for any prime that is popular on the interval $[2,x]$.  We show first that as $x \to \infty$, if $p$ is popular on $[2,x]$, then $p$ satisfies \begin{equation}
\exp\left\{\left(\frac{1}{4}+o(1)\right)\sqrt{\frac{\log x}{\nu(x)}}\right\} < p < \exp\left\{(2+o(1))\sqrt{\nu(x)\log x}\right\}. \label{roughapprox}
\end{equation}
Next, we show that 
\begin{equation}
p = \exp\left\{\sqrt{\nu(x)\log x}+O(\log \log x)\right\}, \label{firstapprox}
\end{equation}
and finally that the approximation \eqref{massympt} holds.

To see that $\Psi\left(\frac{x}{p},p\right)$ is maximized near \eqref{roughapprox}, we first set 
\begin{equation*}P_0 = \exp\left\{ \sqrt{\nu(x)\log x }\right\} \end{equation*}
and let \begin{equation*}
u_0 = \frac{\log x}{\log P_0} - 1 = \sqrt{\frac{\log x}{\nu(x)}}-1.\end{equation*}  If $p'$ is the greatest prime less than or equal to $P_0$ then $\Psi\left(\frac{x}{p'},p'\right)\geq \Psi\left(\frac{x}{P_0},P_0\right)$ and hence if $\Psi\left(\frac{x}{q},q\right) < \Psi\left(\frac{x}{P_0},P_0\right)$, for some prime $q$, then $q$ is not popular on $[2,x]$.

Note that by definition $\nu(x) = \xi(u_0)$. We then compute, using \eqref{hildapprox} as well as \eqref{alladiapprox} that \begin{align}
\Psi\left(\frac{x}{P_0},P_0\right) &= \frac{x}{P_0}\rho(u_0)\left(1+O\left(\frac{\log u_0}{\log P_0}\right)\right)\nonumber \\
&=x\sqrt{\frac{\xi'(u_0)}{2\pi}}\exp\left\{\gamma -u_0\xi(u_0) + \int_0^{\xi(u_0)}\frac{e^s-1}{s}ds \right\} \left(1+ O\left(\frac{1}{u_0}\right)\right)\nonumber \\
&\gg \frac{x}{\sqrt{u_0}}\exp\left\{-\left(\sqrt{\frac{\log x}{\nu(x)}}-1\right)\nu(x) -\sqrt{\nu(x)\log x}\right\}\nonumber \\
&= x\exp\left\{ -2\sqrt{\nu(x)\log x}+\nu(x)-\tfrac{1}{2}\log u_0\right\}\nonumber \\
&\geq x \exp\left\{ -2\sqrt{\nu(x)\log x}\right\} \label{peakheight}
\end{align}
for sufficiently large $x$. Using the elementary estimate $\Psi\left(x,y\right) \ll x\exp\left\{-\frac{\log x}{2\log y}\right\}$, $x\geq y\geq 2$, (see \cite[Section III.5 Theorem 1]{tenenbaum}) we see that for any $\epsilon >0$ and sufficiently large $x$ that 
if $q < \exp\left\{\frac{\sqrt{\log x}}{(4+\epsilon)\sqrt{\nu(x)}}\right\}$, then 
\begin{align*}
\Psi\left(\frac{x}{q},q\right) &\ll \frac{x}{q}\exp\left\{-\frac{\log x}{2\log q}\right\}
< x\exp\left\{-(2+\tfrac{\epsilon}{2})\sqrt{\nu(x)\log x} \right\}
\end{align*}
which is asymptotically less than \eqref{peakheight}.  Similarly, if $q> \exp\left\{(2+\epsilon)\sqrt{\nu(x)\log x}\right\}$, then trivially
\begin{align}
\Psi\left(\frac{x}{q},q\right) &<\frac{x}{q} <x\exp\left\{-(2+\epsilon)\sqrt{\nu(x)\log x} \right\},
\end{align}
which proves \eqref{roughapprox}.  We can thus assume without loss of generality in the following that a prime popular on $[2,x]$ must lie in the range where Hildebrand's approximation \eqref{hildapprox} holds, which we now use along with \eqref{alladiapprox} and \eqref{xiint} to prove \eqref{firstapprox}. 

Suppose $q$ is a prime lying in the interval \eqref{roughapprox}, also satisfying \begin{equation}
|\log P_0 - \log q|>2\nu(x). \label{roughlogapprox}
\end{equation}  We will show that for sufficiently large $x$, $\Psi\left(\frac{x}{P_0},P_0\right)>\Psi\left(\frac{x}{q},q\right)$ which means that some other prime occurs more frequently than $q$ as the largest prime divisor on the interval $[2,x]$, which will then imply \eqref{firstapprox} because $\nu(x) = O(\log \log x)$.  

Letting $u_q=\frac{\log x}{\log q}-1$ and, as before, $u_0=\frac{\log x}{\log P_0}-1 = \sqrt{\frac{\log x}{\nu(x)}}-1$, we have, using \eqref{alladiapprox} and \eqref{xiint} that
\begin{align}
\frac{\Psi\left(\frac{x}{P_0},P_0\right)}{\Psi\left(\frac{x}{q},q\right)} &= \frac{\frac{x}{P_0}\rho\left(\frac{\log x}{\log P_0}-1\right)}{\frac{x}{q}\rho\left(\frac{\log x}{\log q}-1\right)}\left(1+O\left(\frac{\log \log x}{\log P_0}\right)\right)\nonumber \\
&= \frac{q}{P_0}\sqrt{\frac{\xi'(u_0)}{\xi'(u_q)}}\exp\left\{\int_{u_0}^{u_q} \xi(t)dt\right\} \left(1+O\left(\frac{1}{u_0}\right) +O\left(\frac{\log \log x}{\log P_0}\right)\right). \label{approxratio}
\end{align}
First, if $q<P_0$ then $u_0<u_q$ and so, using \eqref{xiprime} we know that $\frac{\xi'(u_0)}{\xi'(u_q)} >1$, at least for sufficiently large $x$, and that $\int_{u_0}^{u_q}\xi(t)dt > (u_q-u_0)\xi(u_0)$.  Using these inequalities we see that the main term in \eqref{approxratio} is greater than
\begin{align}
\frac{q}{P_0}\exp\left\{(u_q-u_0)\xi(u_0)\right\}
&= \exp\left\{\log q - \log P_0 + \left(\frac{\log x}{\log q}-\frac{\log x}{\log P_0}\right)\xi(u_0)\right\}.  \label{psiratin}
\end{align}
Because $\xi(u_0) = \xi\left(\frac{\log x}{\log P_0} -1\right) = \nu(x) = \frac{\log^2 P_0}{\log x}$, we can rewrite the exponent above as \begin{equation}
\log q -\log P_0 + \frac{\log^2 P_0}{\log q} -\log P_0. \label{qexponent}
\end{equation}
Differentiating this with respect to $\log q$ gives $1-\frac{\log^2 P_0}{\log^2 q}$, which is negative for all $q<P_0$, and so, as a function of $q$, \eqref{qexponent} is strictly decreasing for all $q<P_0$.  Thus, in our situation, \eqref{psiratin} is minimized when $\log P_0 -\log q = 2\nu(x)$ in which case it equals
\begin{align*}
\exp\left\{-2\nu(x) +\log P_0\left(\frac{\log P_0}{\log P_0 -2\nu(x)} -1 \right)\right\}
&=\exp\left\{\frac{4\nu(x)^2}{\log P_0-2\nu(x)}\right\}.
\end{align*}
This is not only greater than 1, but also asymptotically greater than the error term of \eqref{approxratio}, and so we can conclude that the ratio there is strictly greater than 1, for sufficiently large $x$. Therefore some other prime occurs more frequently than $q$ as the largest prime divison on $[2,x]$.  

If instead, $q > P_0$, we have $u_q<u_0$ which means $\frac{\xi'(u_0)}{\xi'(u_q)} <1$, and so a little more care is required.  Let  $\delta = u_0-u_q$. Because $\log q - \log P_0> 2\nu(x)$, we will have that 
\begin{equation*}\delta =u_0-u_q =\frac{\log x}{\log P_0} - \frac{\log x}{\log q} >\frac{\log x}{\log P_0} - \frac{\log x}{\log P_0 + 2\nu(x)} > \frac{1.99\nu(x)\log x}{\log^2 P_0} = 1.99 \end{equation*}
for sufficiently large $x$.  Also, from \eqref{roughapprox}, we may assume that for any fixed $0<\epsilon<\frac{1}{2}$ and sufficiently large $x$, $ \log q <(2+\epsilon) \log P_0$ and so \begin{equation} \delta =\frac{\log x}{\log P_0} - \frac{\log x}{\log q}<u_0\left(1-\frac{1}{2+\epsilon}\right)<\frac{3u_0}{5}. \label{roughupper} \end{equation}  In this case we can use \eqref{xiprime} to approximate
\begin{align*}
\frac{\xi'(u_0)}{\xi'(u_q)}&=\left(\frac{\xi(u_0)}{\xi(u_0-\delta)}\right)\left(\frac{(u_0-\delta)\xi(u_0-\delta)-u_0+\delta+1}{u_0\xi(u_0)-u_0+1}\right)\nonumber \\
&=\left(\frac{\xi(u_0)}{\xi(u_0)+O\left(\delta\xi'(u_0)\right)}\right)\left(1-\frac{\delta\left(\xi(u_0)+O(1)\right)}{u_0\xi(u_0)-u_0+1}+O\left(\frac{1}{u_0\xi(u_0)}\right)\right)\nonumber \\
&=1-\frac{\delta}{u_0} + O\left(\frac{\delta}{u_0\xi(u_0)}\right).
\end{align*}
If we now use the somewhat more precise approximation \begin{equation*}\int_{u_0}^{u_q}\xi(t)dt > -(u_0-u_q)\xi(u_0)+\frac{1}{2}(u_0-u_q)^2\xi'(u_0),\end{equation*} we have that the main term of \eqref{approxratio} is greater than
\begin{align}
\frac{q}{P_0}\sqrt{\frac{\xi'(u_0)}{\xi'(u_q)}}\exp\left\{(u_q-u_0)\xi(u_0)+\frac{1}{2}(u_0-u_q)^2\xi'(u_0)\right\}. \label{qgreater}
\end{align}
We now consider the term 

\begin{align}
\sqrt{\frac{\xi'(u_0)}{\xi'(u_q)}}\exp&\left\{\frac{1}{2}(u_0-u_q)^2\xi'(u_0)\right\} \nonumber \\
 &=\sqrt{1-\frac{\delta}{u_0} + O\left(\frac{\delta}{u_0\xi(u_0)}\right)}\exp\left\{\frac{\delta^2}{2u_0}\left(1+O\left(\tfrac{1}{\xi(u_0)}\right)\right)\right\}, \label{mixedterm}
\end{align}
using \eqref{xiprime}. Using \eqref{roughupper} we see that when $\delta > \sqrt{u_0}$ this expression is greater than $\sqrt{\frac{2e}{5}}>1$ for sufficiently large $x$.  If $\delta \leq \sqrt{u_0}$, then we can rewrite \eqref{mixedterm} as

\begin{align}
\exp\left\{\frac{\delta^2}{2u_0} +\tfrac{1}{2}\log\left(1-\frac{\delta}{u_0}\right) + O\left(\frac{\delta^2}{u_0\xi(u_0)}\right)\right\}&= \exp\left\{\frac{\delta^2-\delta}{2u_0}+ O\left(\frac{\delta^2}{u_0\xi(u_0)}\right)\right\},\label{extraterm}
\end{align}
which is greater than 1 for sufficiently large $x$ since $\delta>2$.  Since this term \eqref{mixedterm} can now be ignored in inequality \eqref{qgreater} we are left with the same inequality \eqref{psiratin} as in the first case, and essentially the same analysis shows that the ratio is again greater than 1.  This proves equation \eqref{firstapprox}.

In order to prove the theorem, we will now need to use the more precise approximation \eqref{saiasappone}. In particular we have, using that $u = \frac{\log x}{\log p} - 1$ and that $p$ is in the interval described in \eqref{roughapprox},
\begin{align}
\Psi\left(\frac{x}{p},p\right) &= \frac{x}{p} \kappa\left(\frac{x}{p},p\right)\left(1+O\left(\left(\frac{\log \log x}{\log p}\right)^2\right)\right)\nonumber \\
&=\frac{x}{p}\left(\rho\left(u\right) +(\gamma-1)\frac{\rho'\left(u\right)}{\log p}\right)\left(1+O\left(\left(\frac{\log \log x}{\log p}\right)^2\right)\right)\nonumber \\
&=\frac{x}{p}\rho(u)\left(1+\frac{1-\gamma}{\log p}\xi(u)+
O\left(\frac{1}{u^2}\right)\right).\label{saiasone}
\end{align}

Let $c$ be a parameter, \begin{equation}
s = \exp\left\{\sqrt{\nu(x) \log x} + c\nu(x)\right\},\label{qdef}
\end{equation}
and $u_s = \frac{\log x}{\log s} -1$.

In the following we will optimize the value of $c$ as a function of $x$, however, from \eqref{roughlogapprox}, we can assume without loss of generality that $|c| \leq 2$.  In particular, we would like to choose $c$ so as to maximize the ratio \begin{align}
\frac{\Psi\left(\frac{x}{s},s\right)}{\Psi\left(\frac{x}{P_0},P_0\right)}&= \frac{\frac{x}{s}\rho(u_s)\left(1+\frac{1-\gamma}{\log s}\xi(u_s)\right)}{\frac{x}{P_0}\rho(u_0)\left(1+\frac{1-\gamma}{\log P_0}\xi(u_0)\right)}\left(1+O\left(\frac{1}{u^2_0}\right)\right)\nonumber \\
&=e^{-c\nu(x)}\frac{\rho(u_s)\left(1+\frac{1-\gamma}{ \sqrt{\nu(x)\log x}+c\nu(x)}\xi(u_s)\right)}{\rho(u_0)\left(1+\frac{1-\gamma}{\sqrt{\nu(x)\log x}}\xi(u_0)\right)}\left(1+O\left(\frac{1}{u^2_0}\right)\right). \label{psirat}
\end{align}

Now, since $\sqrt{\frac{\nu(x)}{\log x}} = \frac{1}{u_0+1}$, we have that

\begin{align}
u_s-u_0 = \frac{\log x}{\log s} - \frac{\log x}{\log P_0} &= \frac{\log x}{\sqrt{\nu(x)\log x}+c\nu(x)} - \frac{\log x}{\sqrt{\nu(x)\log x}}\nonumber \\
&=-c+c^2\sqrt{\frac{\nu(x)}{\log x}}+O\left(\frac{c^3}{u_0^2}\right)\nonumber \\
&= -c +\frac{c^2}{u_0+1}+O\left(\frac{c^3}{u_0^2}\right) \label{udiff}
\end{align}
and so, using Lemma \ref{shift}, and the fact that $\nu(x) = \xi(u_0)$,
 \begin{align}
\frac{\rho(u_s)}{\rho(u_0)} &= \exp\left\{\nu(x)\left(c-\frac{c^2}{u_0{+}1}{+}O\left(\frac{c^3}{u_0^2}\right)\right)\right\}\left(1+\frac{c}{2u_0}\left(1-\frac{c\nu(x)}{\nu(x){-}1}+\frac{1}{(\nu(x){-}1)^2}\right)+O\left(\frac{1}{u_0^2}\right)\right)\nonumber \\
&= \exp\left\{c\nu(x){-}c^2\left(\frac{\nu(x)}{u_0{+}1}{+}\frac{\nu(x)}{2u_0(\nu(x){-}1)}\right) {+} \frac{c}{2u_0}\left(1{+}\frac{1}{(\nu(x){-}1)^2}\right){+}O\left(\frac{c^3\nu(x){+}1}{u_0^2}\right)\right\}.\label{secondpsiratterm}
\end{align}
Also, we see that the final term of \eqref{psirat} can be ignored since 
\begin{align}
\frac{1+\frac{1-\gamma}{ \sqrt{\nu(x)\log x}+c\nu(x)}\xi(u_s)}{1+\frac{1-\gamma}{\sqrt{\nu(x)\log x}}\xi(u_0)} &= 1+ \frac{\frac{1-\gamma}{ \sqrt{\nu(x)\log x}+c\nu(x)}\xi(u_s)-\frac{1-\gamma}{\sqrt{\nu(x)\log x}}\xi(u_0)}{\left(1+\frac{1-\gamma}{\sqrt{\nu(x)\log x}}\xi(u_0)\right)}\nonumber \\
&=1+ \frac{\frac{1-\gamma}{\sqrt{\nu(x)\log x}}\left(\xi(u_s)-\xi(u_0)\right) + O\left(\frac{\xi(u_s)}{\log x}\right)}{\left(1+\frac{1-\gamma}{\sqrt{\nu(x)\log x}}\xi(u_0)\right)}\nonumber \\
&=1+ \frac{\frac{1-\gamma}{\sqrt{\nu(x)\log x}}\xi'(u_0)\left(u_s-u_0\right) + O\left(\frac{1}{u_0^2}\right)}{\left(1+\frac{1-\gamma}{\sqrt{\nu(x)\log x}}\xi(u_0)\right)}\nonumber \\
&=1+O\left(\frac{1}{u_0^2}\right). \label{thirdpsiratterm}
\end{align}
Using \eqref{secondpsiratterm} and \eqref{thirdpsiratterm} in the ratio \eqref{psirat} we have that
\begin{align}
&\frac{\Psi\left(\frac{x}{s},s\right)}{\Psi\left(\frac{x}{P_0},P_0\right)}\nonumber \\
&=\exp\left\{-c^2\nu(x)\left(\frac{1}{u_0{+}1}{+}\frac{1}{2u_0(\nu(x){-}1)}\right) {+} \frac{c}{2u_0}\left(1{+}\frac{1}{(\nu(x){-}1)^2}\right){+}O\left(\frac{c^3\nu(x){+}1}{u_0^2}\right)\right\}, \label{combinedrat}
\end{align}
and so maximizing this ratio is equivalent to maximizing the polynomial expression in $c$ appearing in the exponent.  After rescaling by dividing out a factor of $\nu(x)\left(\frac{1}{u_0+1}+\frac{1}{2u_0(\nu(x)-1)}\right)$ this expression is  \begin{align}
-c^2 + \frac{c}{2\nu(x)}&\left(\frac{1+\frac{1}{(\nu(x)-1)^2}}{\frac{u_0}{u_0+1}+\frac{1}{2(\nu(x)-1)}}\right)+O\left(\frac{c^3}{u_0}+\frac{1}{\nu(x)u_0}\right)
\end{align}
which is maximized by some $c$ satisfying 
\begin{align}
c &= \frac{1}{4\nu(x)}\left(\frac{1+\frac{1}{(\nu(x)-1)^2}}{\frac{u_0}{u_0+1}+\frac{1}{2(\nu(x)-1)}}\right)+O\left(\frac{c^{3/2}}{\sqrt{u_0}}+\frac{1}{\sqrt{\nu(x)u_0}}\right)\nonumber\\
&=\frac{1}{2\nu(x)}\left(\frac{\nu(x)^2-2\nu(x)+2}{2\nu(x)^2-3\nu(x)^1+1}\right)+O\left(\frac{1}{\sqrt{\nu(x)u_0}}\right)\nonumber \\
&= \frac{1}{4\nu(x)}\left(1-\frac{\nu(x)-3}{2\nu(x)^2 -3\nu(x)+1}\right)+O\left(\frac{1}{\sqrt{\nu(x)u_0}}\right).
\end{align}
Using this expression for $c$ in \eqref{qdef}, we see that the ratio \eqref{psirat} is maximized when $s$ satisfies the expression given in \eqref{massympt}.
\end{proof}

We can use this result to give an asymptotic for the number of times that a prime which is popular on $[2,x]$ appears as the largest prime divisor of an integer on that interval, which we denote by $C(x)$, thus giving the height of the peak of the distribution of $P(n)$ on the interval $[2,x]$.  (Note that if multiple primes are popular on $[2,x]$, they occur the same number of times on that interval, so the function $C(x)$ is well defined for all $x$.)  This theorem is Theorem \ref{intro:modeheight} in the introduction.
 
\begin{theorem} \label{cor:peakheight} If $p$ is popular on the interval $[2,x]$, then $C(x)$, the count of integers $n \in [2,x]$ for which $P(n)=p$, is given asymptotically by 
\begin{equation}
C(x) = \frac{x}{\sqrt{2\pi\log x}}\exp\left\{{-}2\sqrt{\nu(x)\log x}{+} \int\displaylimits_{0}^{\nu(x)}\frac{e^s {-}1}{s}ds{+}\frac{3\nu(x)}{2}{+}\gamma{+}O\left(\frac{1}{\nu(x)}\right)\right\}. \label{peakheightvalue}
\end{equation}
\end{theorem}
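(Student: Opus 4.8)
The plan is to show that for a popular prime $p$ the quantity $C(x)=\Psi\!\left(\tfrac{x}{p},p\right)$ (which equals $C(x)$ by the description in Section~\ref{background}) differs from $\Psi\!\left(\tfrac{x}{P_0},P_0\right)$ by only a negligible factor, where $P_0=\exp\{\sqrt{\nu(x)\log x}\}$ and $u_0=\tfrac{\log x}{\log P_0}-1=\sqrt{\tfrac{\log x}{\nu(x)}}-1$ as in the proof of Theorem~\ref{llqest}, and then to evaluate $\Psi\!\left(\tfrac{x}{P_0},P_0\right)$ precisely. Writing $p=\exp\{\sqrt{\nu(x)\log x}+c\,\nu(x)\}$ as in \eqref{qdef}, the asymptotic \eqref{massympt} of Theorem~\ref{llqest} forces $c=O(1/\nu(x))$, and feeding this value of $c$ into the ratio \eqref{combinedrat} makes every term in the exponent there $O\!\bigl(\tfrac{1}{\nu(x)u_0}\bigr)$, so that
\[
C(x)=\Psi\!\left(\tfrac{x}{p},p\right)=\Psi\!\left(\tfrac{x}{P_0},P_0\right)\!\left(1+O\!\left(\tfrac{1}{\nu(x)u_0}\right)\right).
\]
(One may also see $\Psi\!\left(\tfrac{x}{P_0},P_0\right)\le C(x)$ directly: if $p'\le P_0$ is the largest prime, then $(p',P_0]$ contains no prime, so a number is $p'$-smooth iff it is $P_0$-smooth, whence $\Psi\!\left(\tfrac{x}{p'},p'\right)\ge\Psi\!\left(\tfrac{x}{P_0},P_0\right)$, and $p$, being popular, occurs at least as often as $p'$.) Since $\tfrac{1}{\nu(x)u_0}=o(1/\nu(x))$, it now suffices to compute $\Psi\!\left(\tfrac{x}{P_0},P_0\right)$ to relative accuracy $O(1/\nu(x))$ — a precision for which Hildebrand's formula \eqref{hildapprox} and Alladi's expansion \eqref{alladiapprox} are enough, so that (unlike in Theorem~\ref{llqest}) the sharper estimate \eqref{saiasappone} is not needed here.

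For that computation I would begin exactly as in \eqref{peakheight}, $\Psi\!\left(\tfrac{x}{P_0},P_0\right)=\tfrac{x}{P_0}\rho(u_0)\bigl(1+O(\tfrac{\log u_0}{\log P_0})\bigr)$, and apply \eqref{alladiapprox} to $\rho(u_0)$, but now retaining the prefactor $\sqrt{\xi'(u_0)/2\pi}$ and the integral $\int_0^{\xi(u_0)}\tfrac{e^s-1}{s}\,ds$ rather than discarding them. Here $\log P_0=\sqrt{\nu(x)\log x}$, $\xi(u_0)=\nu(x)$ exactly and $u_0\xi(u_0)=\sqrt{\nu(x)\log x}-\nu(x)$; moreover \eqref{xiprime} gives $\xi'(u_0)=\dfrac{\nu(x)}{u_0\nu(x)-u_0+1}=\sqrt{\tfrac{\nu(x)}{\log x}}\bigl(1+O(1/\nu(x))\bigr)$. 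Collecting these — $\tfrac{x}{P_0}$ supplies $e^{-\sqrt{\nu(x)\log x}}$, the term $-u_0\xi(u_0)$ supplies $-\sqrt{\nu(x)\log x}+\nu(x)$, and $\sqrt{\xi'(u_0)/2\pi}$ supplies $\tfrac1{\sqrt{2\pi}}(\nu(x)/\log x)^{1/4}$, which I would repackage as $\tfrac{1}{\sqrt{2\pi\log x}}(\nu(x)\log x)^{1/4}$ — one obtains
\[
\Psi\!\left(\tfrac{x}{P_0},P_0\right)=\frac{x\,(\nu(x)\log x)^{1/4}}{\sqrt{2\pi\log x}}\exp\!\left\{-2\sqrt{\nu(x)\log x}+\nu(x)+\int_0^{\nu(x)}\frac{e^s-1}{s}\,ds+\gamma+O\!\left(\tfrac{1}{\nu(x)}\right)\right\}.
\]
Finally I would invoke the defining equation \eqref{nufunceq}, $e^{\nu(x)}=1+\sqrt{\nu(x)\log x}-\nu(x)$, which yields $\sqrt{\nu(x)\log x}=e^{\nu(x)}\bigl(1+O(\nu(x)e^{-\nu(x)})\bigr)$ and hence $\tfrac14\log(\nu(x)\log x)=\tfrac{\nu(x)}{2}+o(1/\nu(x))$, i.e.\ $(\nu(x)\log x)^{1/4}=e^{\nu(x)/2}\bigl(1+O(1/\nu(x))\bigr)$; absorbing this into the exponent turns $\nu(x)$ into $\tfrac{3\nu(x)}{2}$ and gives exactly \eqref{peakheightvalue}. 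Combining with the reduction of the first paragraph finishes the proof, the $O(1/(\nu(x)u_0))$ there being swallowed by the $O(1/\nu(x))$.

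The step I expect to be the main obstacle is the uniform bookkeeping of error terms — verifying that every error incurred is at most $O(1/\nu(x))$. This rests on the fact that $u_0\asymp\sqrt{\log x/\nu(x)}$ is much larger than $\nu(x)\asymp\log\log x$, so that Hildebrand's error $\tfrac{\log u_0}{\log P_0}\asymp\tfrac{\log\log x}{\sqrt{\nu(x)\log x}}$, Alladi's error $\tfrac1{u_0}$, the relative correction $\sqrt{\nu(x)/\log x}$ in $\xi'(u_0)$, the quantity $\nu(x)e^{-\nu(x)}$, and the factor $\tfrac{1}{\nu(x)u_0}$ from the first paragraph are all $o(1/\nu(x))$; the single genuinely size-$1/\nu(x)$ contribution is the $\bigl(1+O(1/\nu(x))\bigr)$ correction to $\xi'(u_0)$ arising from the $-u_0+1$ in its denominator, which is precisely what pins the error term in \eqref{peakheightvalue} at $O(1/\nu(x))$ rather than something smaller. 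A secondary point is that one must carry the prefactor $\sqrt{\xi'(u_0)/2\pi}\asymp(\nu(x)/\log x)^{1/4}$ faithfully: it is the source of the $-\tfrac14\log\log x$ which, together with the $\nu(x)$ produced by $-u_0\xi(u_0)$ and the $\tfrac14\log\nu(x)$ from that same prefactor, recombines through the $\nu(x)$-equation \eqref{nufunceq} into the displayed $\tfrac{3\nu(x)}{2}$ and $\tfrac{1}{\sqrt{2\pi\log x}}$.
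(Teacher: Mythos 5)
Your proposal is correct and follows essentially the same route as the paper: both rest on Theorem \ref{llqest}, Hildebrand's formula \eqref{hildapprox}, Alladi's expansion \eqref{alladiapprox} with the prefactor $\sqrt{\xi'/2\pi}$ retained, and the functional equation \eqref{nufunceq} to convert $(\nu(x)\log x)^{1/4}$ into $e^{\nu(x)/2}$. The only (harmless) organizational difference is that you first replace $p$ by $P_0$ via the ratio \eqref{combinedrat} and evaluate at $u_0$, where $\xi(u_0)=\nu(x)$ exactly, whereas the paper evaluates at $p$ itself and lets the $\pm\tfrac14$ contributions from $x/p$ and from $-u\xi(u)$ cancel; your error bookkeeping correctly identifies the $O(1/\nu(x))$ correction to $\xi'(u_0)$ as the dominant error, matching the stated remainder.
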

\begin{proof}
We know from the above theorem that if $p$ is popular on $[2,x]$ then 
\begin{equation}
p = \exp\left\{\sqrt{\nu(x) \log x }+\frac{1}{4}+O\left(\frac{1}{\nu(x)}\right)\right\} \label{papprox}.
\end{equation}
Using \eqref{hildapprox}, 
\begin{align}
\Psi\left(\frac{x}{p},p\right) 
&=\frac{x}{p}\rho(u)\left(1+
O\left(\frac{\log(1+ u)}{\log p}\right)\right), \label{peakcountapprox}
\end{align}
where  
\begin{align}u &= \frac{\log x}{\log p} -1 \ =\ \frac{\log x}{\sqrt{\nu(x) \log x }+\frac{1}{4}+O\left(\frac{1}{\nu(x)}\right)}-1 \nonumber \\
&= \sqrt{\frac{\log x}{\nu(x)}} -1-\frac{1}{4\nu(x)}+O\left(\frac{1}{\nu(x)^2}\right). \label{peaku}
\end{align}
Now, 
\begin{align}
\xi(u) &= \xi\left(\sqrt{\frac{\log x}{\nu(x)}} -1+O\left(\frac{1}{\nu(x)}\right)\right)\nonumber \\
&= \xi\left(\sqrt{\frac{\log x}{\nu(x)}} -1\right) + O\left(\frac{\xi'\left(\sqrt{\frac{\log x}{\nu(x)}} -1\right)}{\nu(x)}\right)\nonumber \\
&=\nu(x)+ O\left(\frac{1}{\sqrt{\nu(x)\log x}}\right),
\end{align} 
so, using \eqref{alladiapprox}, along with \eqref{xiprime} we see that
\begin{align}
\rho(u) &= \left(1+O\left(\frac{1}{u}\right)\right)\sqrt{\frac{\xi'(u)}{2\pi}} \exp\left\{\gamma -u\xi(u) + \int_{0}^{\xi(u)}\frac{e^s -1}{s}ds\right\}\nonumber \\
&= \left(1+O\left(\frac{1}{\nu(x)}\right)\right)\sqrt{\frac{1}{2\pi u}} \exp\left\{\gamma -u\nu(x) +\int_{0}^{\nu(x)}\frac{e^s -1}{s}ds + O\left(\frac{e^{\nu(x)} -1}{\nu(x)\sqrt{\nu(x)\log x}}\right)\right\}\nonumber \\
&=\frac{1}{\sqrt{2\pi}}\left(\frac{\nu(x)}{ \log x}\right)^{1/4} \exp\left\{\gamma -\sqrt{\nu(x)\log x}+\nu(x) +\frac{1}{4}+\int_{0}^{\nu(x)}\frac{e^s -1}{s}ds+O\left(\frac{1}{\nu(x)}\right)\right\}.  \label{eq:maxrhou}
\end{align}
Combining this with \eqref{papprox} and \eqref{peakcountapprox} we have that 
\begin{align}
\Psi\left(\tfrac{x}{p},p\right) = \frac{x}{\sqrt{2\pi\log x}}\exp\left\{-2\sqrt{\nu(x)\log x}{+} \int\displaylimits_{0}^{\nu(x)}\frac{e^s {-}1}{s}ds{+}\frac{3\nu(x)}{2}{+}\gamma {+}O\left(\frac{1}{\nu(x)}\right)\right\}, \nonumber
\end{align}
where we have also used \eqref{nufunceq} to see that $e^{\nu(x)/2}=(\nu(x)\log x)^{1/4} + O\left(\frac{\nu(x)^{3/4
}}{(\log x)^{1/4}}\right)$.
\end{proof}

Note that, asymptotically, $\int_{0}^{\nu(x)} \frac{e^s -1}{s}ds = \frac{e^{\nu(x)}}{\nu(x)}+O\left(\frac{e^{\nu(x)}}{\nu(x)^2}\right)=\sqrt{\frac{\log x}{\nu(x)}}+O\left(\frac{\sqrt{\log x}}{\nu(x)}\right)$, and so the expression in \eqref{peakheightvalue} is given approximately by \begin{equation}
x\exp\left\{-\sqrt{2\log x \left(\log \log x + \log \log \log x - (2+\log 2) + o(1)\right)}\right\}
\end{equation} 
which is the estimate given in \cite[Theorem 1]{koninckmode}.

\section{Popular primes} \label{popprimes}
Having seen that the value of any prime which is popular on the interval $[2,x]$ tends, slowly, to infinity and takes on prime values, one might expect that every prime number is popular on some such interval.  This turns out not to be the case.  We define a \textbf{popular prime} to be a prime number which is popular on some such interval $[2,x]$. 

In what follows we will see that not only are there prime numbers which are not popular, but in fact there is a positive proportion of primes which are not popular.  First however, we use Theorem \ref{llqest} to give a lower bound for their count.

\begin{corollary} \label{lbound}
There exists an absolute positive constant $C$ such that the count of of the popular primes up to $x$, for $x>10$, is at least $C\frac{\log^{3/2} x}{\sqrt{\log \log x}}$.
\begin{proof}
Theorem \ref{llqest} implies that there exists an absolute constant $C'$ such that for any popular prime, $p$, popular on the interval $[2,x']$, there exists another popular prime in the interval 
\begin{equation}\left(p,p\left(1+C'\left(\left(\frac{\log \log x'}{\log x'}\right)^{1/4}\right)\right)\right].\end{equation}
Setting $y=p$ we have $\log y = \sqrt{\log x'\log \log x'}+O\left(1\right)$, and so we see that for a suitably large choice of  $C''$ and any $y$ there is a popular prime in the interval 
\begin{equation} \left(y,y\left(1+C''\sqrt{\frac{\log \log y}{\log y}}\right)\right]. \label{popint} \end{equation}
If we restrict to counting popular primes appearing in $[x^{1/2},x]$ in intervals of the form \eqref{popint} where $y$ is greater than $x^{1/2}$, then we may assume that $1+C''\sqrt{\frac{\log \log y}{\log y}}\geq  1+C'''\sqrt{\frac{\log \log x}{\log x}}$ for yet another constant $C'''$. The number of non-overlapping intervals of the form $\left(y,y\left(1+C'''\sqrt{\frac{\log \log x}{\log x}}\right)\right]$ between $x^{1/2}$ and $x$ is \begin{equation}\frac{\tfrac{1}{2}\log x}{\log\left(1+C'''\sqrt{\frac{\log \log x}{\log x}}\right)} \gg \frac{\log^{3/2} x}{\sqrt{\log \log x}}\end{equation}  and the result follows.
\end{proof}
\end{corollary}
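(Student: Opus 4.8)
The plan is to turn the location statement of Theorem~\ref{llqest} into a statement about how tightly the popular primes must be packed, and then count them with a covering argument. Theorem~\ref{llqest} says that any prime $p$ popular on an interval $[2,x']$ satisfies $\log p=\sqrt{\nu(x')\log x'}+\tfrac14\bigl(1-\tfrac{\nu(x')-3}{2\nu(x')^2-3\nu(x')+1}\bigr)+O\bigl((\log\log x'/\log x')^{1/4}\bigr)$; that is, $\log p$ is pinned to within $O\bigl((\log\log x'/\log x')^{1/4}\bigr)$ of the continuous, strictly increasing function $t\mapsto\sqrt{\nu(t)\log t}+\tfrac14(1-\cdots)$. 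Since \emph{every} interval $[2,x']$ contains at least one popular prime, this confinement prevents large multiplicative gaps between consecutive popular primes.

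First I would re-express the error term in terms of the popular prime itself. Writing $y=p$ and using $\log y\asymp\sqrt{\nu(x')\log x'}\asymp\sqrt{\log x'\log\log x'}$ gives $\log x'\asymp(\log y)^2/\log\log y$ and $\log\log x'\asymp\log\log y$, so the relative confinement width $(\log\log x'/\log x')^{1/4}$ translates into $\asymp\sqrt{\log\log y/\log y}$. Combining this with continuity of the center function — slide the parameter $x'$ upward until the lower edge of the window for $[2,x']$ just clears $y$, and check that its upper edge is then only $y\bigl(1+O(\sqrt{\log\log y/\log y})\bigr)$ — I would deduce the key estimate: there is an absolute constant $C'$ so that every sufficiently large $y$ has a popular prime in $\bigl(y,\,y\bigl(1+C'\sqrt{\log\log y/\log y}\bigr)\bigr]$.

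The covering argument then finishes the proof inside the block $[x^{1/2},x]$. Define a chain $x^{1/2}=y_0<y_1<\cdots$ by $y_{i+1}=y_i\bigl(1+C'\sqrt{\log\log y_i/\log y_i}\bigr)$ and let $N$ be the largest index with $y_N\le x$. Each interval $(y_i,y_{i+1}]$, $i<N$, lies in $(x^{1/2},x]$, contains a popular prime, and the intervals are pairwise disjoint, so there are at least $N$ popular primes up to $x$. Since $t\mapsto\sqrt{\log\log t/\log t}$ is decreasing, every step has $\log(y_{i+1}/y_i)\ll\sqrt{\log\log x/\log x}$, while telescoping gives $\sum_{i<N}\log(y_{i+1}/y_i)=\log(y_N/y_0)\gg\log x$; dividing yields $N\gg\log x\big/\sqrt{\log\log x/\log x}=\log^{3/2}x/\sqrt{\log\log x}$, and the finitely many values $10<x\le x_0$ are absorbed into the constant.

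I expect the real obstacle to be the first step rather than the arithmetic. Theorem~\ref{llqest} only pins down where popular primes lie for a \emph{fixed} $x'$, whereas the covering argument needs the \emph{existence} of a popular prime in a window prescribed around an \emph{arbitrary} $y$. Bridging this requires tracking the auxiliary parameter $x'$ as it varies — using that $\sqrt{\nu(x')\log x'}$ is continuous and tends to infinity, that the window radius decreases monotonically, and that each $[2,x']$ is guaranteed a popular prime — together with a faithful conversion of a width measured relative to $x'$ into one measured relative to $y$. The remaining bookkeeping (comparing $\log\log x^{1/2}$ with $\log\log x$, and the harmless case $y=p$) is routine.
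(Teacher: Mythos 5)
Your proposal is correct and follows essentially the same route as the paper: convert the $O\bigl((\log\log x'/\log x')^{1/4}\bigr)$ localization from Theorem~\ref{llqest} into the existence of a popular prime in $\bigl(y,\,y\bigl(1+O(\sqrt{\log\log y/\log y})\bigr)\bigr]$ for every large $y$, then count disjoint such intervals in $[x^{1/2},x]$. The subtlety you flag about passing from a fixed $x'$ to an arbitrary $y$ is handled with the same level of brevity in the paper itself, and your chain/telescoping count is just a restatement of the paper's division of $\tfrac12\log x$ by the step length.
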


Before we can prove an upper bound for the distribution of the popular primes, we need first a version of the Buchstab identity for the function $\Psi(x,y)$ defined earlier.

\begin{lemma} \label{genbuch} Let $p_n$ denote the $n$th prime number.  For any $k \geq 1$, \begin{equation}\Psi\left(\frac{x}{p_{n+k}},p_{n+k}\right) = \Psi\left(\frac{x}{p_{n+k}},p_n\right)+\sum_{i=1}^k \Psi\left(\frac{x}{p_{n+k}p_{n+i}},p_{n+i}\right).\end{equation}
\end{lemma}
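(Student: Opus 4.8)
The plan is to prove this by counting integers according to how their largest prime factor compares with $p_n$ and $p_{n+k}$. Recall that $\Psi\left(\tfrac{x}{p_{n+k}},p_{n+k}\right)$ counts $p_{n+k}$-smooth integers up to $x/p_{n+k}$; equivalently, since $p_{n+k}$ is the largest prime in the allowed range, it counts integers $m \le x/p_{n+k}$ whose prime factors all lie among $\{p_1,\dots,p_{n+k}\}$. I would partition this set of $m$'s according to the largest prime dividing $m$: either $m$ is $p_n$-smooth, or the largest prime factor of $m$ is $p_{n+i}$ for some $i$ with $1 \le i \le k$.

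First I would handle the $p_n$-smooth case: these $m$ are exactly counted by $\Psi\left(\tfrac{x}{p_{n+k}},p_n\right)$, giving the first term on the right. Next, for each $i$ with $1 \le i \le k$, I would count those $m \le x/p_{n+k}$ all of whose prime factors lie in $\{p_1,\dots,p_{n+k}\}$ and whose largest prime factor is exactly $p_{n+i}$. Writing $m = p_{n+i} \cdot m'$, the condition becomes that $m'$ is $p_{n+i}$-smooth and $m' \le \tfrac{x}{p_{n+k} p_{n+i}}$; note that once we have factored out $p_{n+i}$, the remaining factor $m'$ is automatically constrained only to be $p_{n+i}$-smooth (it may use $p_{n+i}$ again), and this parametrization is a bijection between such $m$ and such $m'$. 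Hence this subcount is exactly $\Psi\left(\tfrac{x}{p_{n+k}p_{n+i}},p_{n+i}\right)$. Summing over $i$ from $1$ to $k$ and adding the $p_n$-smooth term yields the claimed identity.

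There is essentially no analytic obstacle here — this is a purely combinatorial decomposition — so the only thing requiring care is verifying that the cases (largest prime factor $\le p_n$, versus equal to $p_{n+i}$ for each $i$) are exhaustive and mutually disjoint, and that the map $m \mapsto m/p_{n+i}$ is a genuine bijection onto the set counted by $\Psi\left(\tfrac{x}{p_{n+k}p_{n+i}},p_{n+i}\right)$. Exhaustiveness follows because every prime factor of $m$ is at most $p_{n+k}$, so if $m$ is not $p_n$-smooth its largest prime factor is one of $p_{n+1},\dots,p_{n+k}$; disjointness is immediate since a positive integer has a unique largest prime factor. For the bijection, the inverse map is $m' \mapsto p_{n+i} m'$, and one checks that $p_{n+i} m'$ has all prime factors $\le p_{n+i} \le p_{n+k}$, has largest prime factor exactly $p_{n+i}$, and satisfies $p_{n+i} m' \le x/p_{n+k}$ precisely when $m' \le x/(p_{n+k}p_{n+i})$. (If $k=1$ the sum has a single term and the identity reduces to the standard Buchstab-type recursion.)
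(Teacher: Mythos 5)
Your proof is correct and follows essentially the same route as the paper: both partition the $p_{n+k}$-smooth integers up to $x/p_{n+k}$ according to whether they are $p_n$-smooth or have largest prime factor exactly $p_{n+i}$ for some $1\le i\le k$, and then use the standard bijection $m\mapsto m/p_{n+i}$ for the latter case. The extra care you take with exhaustiveness, disjointness, and the bijection is fine but not substantively different from the paper's argument.
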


\begin{proof}  The left hand side, $\Psi\left(\tfrac{x}{p_{n+k}},p_{n+k}\right)$ counts those integers at most $x$ whose largest prime factor is $p_{n+k}$.  Taking such an integer $m$, and dividing out a factor of $p_{n+k}$ we obtain an integer, $\tfrac{m}{p_{n+k}}$, at most $\tfrac{x}{p_{n+k}}$ whose largest prime factor is either less than or equal to $p_n$, in which case $m$ is counted by $\Psi\left(\tfrac{x}{p_{n+k}},p_n\right)$, or its largest prime factor is $p_{n+i}$ for some $1 \leq i \leq k$, in which case $m$ is counted by $\Psi\left(\tfrac{x}{p_{n+k}p_{n+i}},p_{n+i}\right)$.
\end{proof}

We can use this lemma to show that the average prime spacing between popular primes cannot be too small.


\begin{theorem} \label{spacingthm}If the primes $p_{n}$ and $p_{n+k}$ are any two popular primes satisfying \begin{equation}
p_{n+k}-p_n=O\left(\frac{p_n}{\log p_n}\right), \label{maxspace}
\end{equation} then the average prime gap between these primes must satisfy \begin{equation}\frac{p_{n+k}-p_{n}}{k} \geq \left(1+O\left(\frac{\log \log p_n}{\log p_n}\right)\right) \frac{\rho(2-\alpha)\log p_{n}}{2-\alpha}, \label{spacing} \end{equation} 
where $\alpha = \frac{\log(p_{n+k}-p_n)}{\log p_n}$. 
\end{theorem}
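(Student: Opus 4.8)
The plan is to fix a value of $x$ for which $p_n$ is popular on $[2,x]$ --- one exists precisely because $p_n$ is a popular prime --- and then to play two estimates for $\Psi(x/p_{n+k},p_{n+k})$ against each other, using the generalized Buchstab identity of Lemma \ref{genbuch} together with the fact that, by popularity of $p_n$, $\Psi(x/p_{n+k},p_{n+k})\le\Psi(x/p_n,p_n)$. Write $d=p_{n+k}-p_n$, $\alpha=\frac{\log d}{\log p_n}$, $u=\frac{\log x}{\log p_n}-1$, and $\nu=\nu(x)$ (which is $O(\log\log p_n)$ in this range). Since $p_n$ is popular on $[2,x]$, Theorem \ref{llqest} and the computation of $\xi(u)=\nu+O(1/\sqrt{\nu\log x})$ made in the proof of Theorem \ref{cor:peakheight} give $\log p_n=\sqrt{\nu\log x}+\tfrac14+O(1/\nu)$, and combining these with \eqref{xi} and \eqref{nufunceq} yields
\[
u\xi(u)=e^{\xi(u)}-1=\sqrt{\nu\log x}-\nu+O(1)=\log p_n-\nu+O(1)=\log p_n\left(1+O\!\left(\tfrac{\log\log p_n}{\log p_n}\right)\right).
\]
The hypothesis $d=O(p_n/\log p_n)$ guarantees $\alpha\in(0,1)$, so that $2-\alpha\in(1,2)$ and $\rho(2-\alpha)$ is bounded away from $0$, and also that $d/p_n$ and $\log(p_{n+k}/p_n)/\log p_n$ are $O(1/\log p_n)$.

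First I would control the defect $\Psi(x/p_n,p_n)-\Psi(x/p_{n+k},p_n)$, which counts the $p_n$-smooth integers in $(x/p_{n+k},\,x/p_n]$, by applying Hildebrand's short-interval estimate \eqref{psiloc} to that interval. Its auxiliary argument is exactly $\frac{(x/p_{n+k})\,p_n}{x/p_n-x/p_{n+k}}=\frac{p_n^2}{d}$, and since $\log(p_n^2/d)=(2-\alpha)\log p_n$, the approximation \eqref{hildapprox} gives $\Psi(p_n^2/d,p_n)=\frac{p_n^2}{d}\rho(2-\alpha)(1+O(1/\log p_n))$, whence
\[
\Psi\!\left(\tfrac{x}{p_n},p_n\right)-\Psi\!\left(\tfrac{x}{p_{n+k}},p_n\right)\le\left(1+O\!\left(\tfrac{1}{\log p_n}\right)\right)\frac{\Psi(x/p_{n+k},p_n)\,(2-\alpha)\,d}{p_n\,\rho(2-\alpha)}.
\]
Because this defect is itself $O(\Psi(x/p_n,p_n)/\log p_n)$, I may replace $\Psi(x/p_{n+k},p_n)$ on the right by $\Psi(x/p_n,p_n)$ at the cost of a factor $1+O(1/\log p_n)$. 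Writing Lemma \ref{genbuch} as $\Psi(x/p_{n+k},p_{n+k})=\Psi(x/p_{n+k},p_n)+\sum_{i=1}^{k}\Psi(x/(p_{n+k}p_{n+i}),p_{n+i})$ and using $\Psi(x/p_{n+k},p_{n+k})\le\Psi(x/p_n,p_n)$ then produces the key inequality
\[
\sum_{i=1}^{k}\Psi\!\left(\tfrac{x}{p_{n+k}p_{n+i}},p_{n+i}\right)\le\left(1+O\!\left(\tfrac{1}{\log p_n}\right)\right)\frac{\Psi(x/p_n,p_n)\,(2-\alpha)\,d}{p_n\,\rho(2-\alpha)}.
\]

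Next I would evaluate both sides asymptotically with \eqref{hildapprox}. Because $d=O(p_n/\log p_n)$, every prime $p_{n+i}$ with $1\le i\le k$ lies in a very short interval about $p_n$, so the parameter of the $i$-th summand is $v_i=\frac{\log(x/(p_{n+k}p_{n+i}))}{\log p_{n+i}}=u-1+O\!\left(\tfrac{1}{\nu\log p_n}\right)$, and hence, uniformly in $i$,
\[
\Psi\!\left(\tfrac{x}{p_{n+k}p_{n+i}},p_{n+i}\right)=\frac{x}{p_n^2}\,\rho(u-1)\left(1+O\!\left(\tfrac{\log\log p_n}{\log p_n}\right)\right),
\]
while likewise $\Psi(x/p_n,p_n)=\frac{x}{p_n}\rho(u)(1+O(\log\log p_n/\log p_n))$. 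Substituting these into the key inequality, cancelling $x/p_n^2$, and invoking Lemma \ref{shift} in the form $\rho(u-1)/\rho(u)=u\xi(u)(1+O(1/\log p_n))$ leaves
\[
k\,u\xi(u)\le\left(1+O\!\left(\tfrac{\log\log p_n}{\log p_n}\right)\right)\frac{(2-\alpha)\,d}{\rho(2-\alpha)}.
\]
Since $u\xi(u)=\log p_n(1+O(\log\log p_n/\log p_n))$ by the first paragraph, solving for $d/k$ gives exactly the bound \eqref{spacing}.

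The last two paragraphs involve a fair amount of bookkeeping: one must check that every quantity handed to \eqref{hildapprox} or \eqref{psiloc} lies in its admissible range, which holds because $\log p_n\asymp\sqrt{\log x\log\log x}$ places $p_n$ comfortably above $\exp\bigl((\log\log x)^{5/3+\epsilon}\bigr)$ and below $\sqrt{x}$, and one must track errors so that the largest of them, $O(\log\log p_n/\log p_n)$, absorbs all the others. I expect the only genuinely delicate point to be the emergence of the factor $\rho(2-\alpha)/(2-\alpha)$: it requires applying \eqref{psiloc} with precisely the interval $(x/p_{n+k},\,x/p_n]$ so that its auxiliary argument collapses to $p_n^2/d$, and then observing that, because the defect it bounds is a factor $O(1/\log p_n)$ smaller than $\Psi(x/p_n,p_n)$, replacing $\Psi(x/p_{n+k},p_n)$ by $\Psi(x/p_n,p_n)$ is harmless. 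Everything else is routine asymptotic manipulation of $\rho$ and $\xi$.
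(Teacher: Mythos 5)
Your proof is correct and rests on the same machinery as the paper's: Lemma \ref{genbuch}, Hildebrand's short-interval bound \eqref{psiloc} applied to the interval $(x/p_{n+k},\,x/p_n]$ so that the auxiliary argument collapses to $p_n^2/d$ and produces the factor $(2-\alpha)/\rho(2-\alpha)$, the asymptotics \eqref{hildapprox} and Lemma \ref{shift} for the Buchstab terms, and Theorem \ref{llqest} to convert $u\xi(u)$ into $\log p_n(1+O(\log\log p_n/\log p_n))$. The one structural difference is the setup: the paper uses the popularity of \emph{both} $p_n$ and $p_{n+k}$ to locate, via a discrete crossing argument on the step functions $\Psi(x/p,p)$, an intermediate $x'$ at which $\Psi(x'/p_{n+k},p_{n+k})=\Psi(x'/p_n,p_n)$ holds exactly, and then works with that equality; you instead fix a single $x$ on which $p_n$ is popular and use only the one-sided inequality $\Psi(x/p_{n+k},p_{n+k})\le\Psi(x/p_n,p_n)$. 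Since the final bound only requires the sum of Buchstab terms to be at most the short-interval count, your one-sided version suffices; it slightly simplifies the argument (no case analysis on whether $x_{n+k}\ge x_n$) and shows that the hypothesis that $p_{n+k}$ is popular is not actually needed for the stated lower bound.
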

\begin{proof}

Suppose that $p_n$ and $p_{n+k}$, $k >0$ are any two popular primes satisfying \eqref{maxspace} and let $\alpha = \frac{\log(p_{n+k}-p_n)}{\log p_n}$.  Because both $p_n$ and $p_{n+k}$ are popular, there exist integers $x_n$ and $x_{n+k}$ such that $p_n$ is popular on the interval $[2,x_n]$, and likewise $p_{n+k}$ is popular on $[2,x_{n+k}]$.

Now, as $x$ increases, the function $\Psi\left(\frac{x}{p},p\right)$ is nondecreasing, in fact, as $x$ increases through the integers, the difference $\Psi\left(\frac{x+1}{p},p\right)-
\Psi\left(\frac{x}{p},p\right)$ is either 0 or 1. So, in the case that $x_{n+k}\geq x_n$, we have that \begin{equation}
\Psi\left(\frac{x_n}{p_{n+k}},p_{n+k}\right)\leq\Psi\left(\frac{x_n}{p_n},p_n\right)\leq\Psi\left(\frac{x_{n+k}}{p_n},p_n\right)\leq \Psi\left(\frac{x_{n+k}}{p_{n+k}},p_{n+k}\right).
\end{equation}
Thus, we see that as $x$ increases from $x_n$ to $x_{n+k}$, there must be an intermediate integer $x'$ between $x_{n+k}$ and $x_{n}$ for which 
\begin{equation} \label{xprime}
\Psi\left(\frac{x'}{p_{n+k}},p_{n+k}\right) = \Psi\left(\frac{x'}{p_n},p_n\right). 
\end{equation}  
Note that it need not necessarily be the case that $x_{n+k}\geq x_n$, however the case that $x_{n+k}< x_n$ is essentially identical and we again find an integer $x'$ between these values satisfying \eqref{xprime}.

By Theorem \ref{llqest} we know that both 
\begin{equation*}
\log p_n  = \sqrt{\nu(x_n)\log x_n}+\frac{1}{4}+O\left(\frac{1}{\nu(x_n)}\right) \label{pnapprox}
\end{equation*}
and 
\begin{equation*}
\log p_{n+k} = \sqrt{\nu(x_{n+k})\log x_{n+k}}+\frac{1}{4}+O\left(\frac{1}{\nu(x_{n+k})}\right). \label{pnkapprox}
\end{equation*}
Since $\log p_{n+k}-\log p_n  = O\left(\frac{1}{ \log p_n }\right)$ and $x'$ lies between $x_n$ and $x_{n+k}$ we must have that 
\begin{align}
\log p_{n} &= \sqrt{\nu(x')\log x'}+\frac{1}{4}+O\left(\frac{1}{\nu(x')}\right). \label{pnxprime}
\end{align} Set $u_0 = \frac{\log x'}{\log p_{n}}-1$. Using Equation \ref{xprime}, Lemma \ref{genbuch} and the approximation $\Psi(x,y) = \left(1{+}O\left(\frac{1}{u}\right)\right)x\rho(u)$ we can write 
\begin{align}\Psi\left(\frac{x'}{p_n},p_n\right) - \Psi\left(\frac{x'}{p_{n+k}},p_{n}\right)&=\sum_{i=1}^k \Psi\left(\frac{x'}{p_{n+k}p_{n+i}},p_{n+i}\right) \nonumber \\
&= \left(1+O\left(\frac{1}{u_0}\right)\right)\sum_{i=1}^k \frac{x'}{p_{n+i}p_{n+k}}\rho\left(\frac{\log x - \log p_{n+k}-p_{n+i}}{\log p_{n+i}}\right).
\end{align}

Using Lemma \ref{shift}, 

\begin{align}
\rho\left(\frac{\log x' -\log p_{n+k}-\log p_{n+i}}{\log p_{n+i}}\right) &= \rho\left(\frac{\log x' -2\log p_{n} +O\left(\frac{1}{\log p_n}\right)}{\log p_{n}\left(1+O\left(\frac{1}{\log^2 p_n }\right)\right)}\right) \nonumber \\
&= \rho\left((u_0-1)\left(1+O\left(\frac{1}{\log^2 p_n }\right)\right)\right) \nonumber \\
&=\left(1+O\left(\frac{u_0 \log u_0 }{\log^2 p_n}\right)\right)\rho(u_0-1)\nonumber \\
&=\left(1+O\left(\frac{1}{\log p_n}\right)\right)\rho(u_0-1).
\end{align}

Since
\begin{equation}\sum_{i=1}^k \frac{1}{p_{n+i}} = \sum_{i=1}^k \frac{1}{p_{n}+O\left(\frac{p_n }{\log p_n}\right)} = \frac{k}{p_{n}}\left(1+ O\left(\frac{1}{\log p_n}\right)\right),\end{equation}
we have that 
\begin{align}\Psi\left(\frac{x'}{p_n},p_n\right) - \Psi\left(\frac{x'}{p_{n+k}},p_{n}\right)&= \left(1+O\left(\frac{1}{u_0}\right)\right)\frac{x'k}{p_np_{n+k}}\rho(u_0-1). \label{buchdiff}
\end{align}

On the other hand, using Hildebrand's upper bound for the count of smooth numbers in short intervals with $z = \frac{x'(p_{n+k}-p_n)}{p_np_{n+k}} =\frac{x'p_n^{\alpha -1}}{p_{n+k}}$ we have that 
\begin{align}
\Psi\left(\frac{x'}{p_n},p_n\right) &- \Psi\left(\frac{x'}{p_{n+k}},p_{n}\right) = \Psi\left(\frac{x'}{p_{n+k}} +z,p_n\right) - \Psi\left(\frac{x'}{p_{n+k}},p_{n}\right)\nonumber \\
&\leq\left(1+O\left(\frac{1}{\log p_{n+k}}\right)\right) \frac{\Psi\left(\frac{x'}{p_{n+k}},p_n\right)p_n\log\left(\frac{x'p_n}{zp_{n+k}}\right)}{\Psi\left(\frac{x'p_n}{zp_{n+k}},p_n\right)\log p_n} \nonumber \\
&=\left(1+O\left(\frac{1}{u_0}\right)\right)\frac{\frac{x'p_n}{p_{n+k}}\rho\left(\frac{\log x'-\log p_{n+k}}{\log p_n}\right)\left(\log x' - \log z +O\left(\frac{1}{\log p_n}\right)\right)}{\frac{x'p_n}{zp_{n+k}}\rho\left(\frac{\log x' + \log p_n- \log p_{n+k} -\log z}{\log p_n}\right)\log p_n} \nonumber \\
&=\left(1+O\left(\frac{1}{u_0}\right)\right)\frac{z\rho\left(u_0\right)\left((2-\alpha)\log p_n +O\left(\frac{1}{\log p_n}\right)\right)}{\rho\left(\frac{(2-\alpha)\log p_n +O\left(\frac{1}{\log p_n}\right)}{\log p_n}\right)\log p_n}\nonumber \\
&=\left(1+O\left(\frac{1}{u_0}\right)\right)\frac{(2-\alpha)\rho\left(u_0\right)x'(p_{n+k}-p_n)}{\rho\left(2-\alpha\right)p_np_{n+k}}.
\label{localdiff}
\end{align}

Using \eqref{xiprime} to see that 
\begin{align}
\xi(u_0) = \xi\left(\frac{\log x'}{\log p_n} -1\right) &=  \xi\left(\frac{\log x'}{\sqrt{\nu(x')\log x'}+O(1)}-1\right) \nonumber \\
&= \xi\left(\sqrt{\frac{\log x'}{\nu(x')}} -1+ O\left(\tfrac{1}{\nu(x')}\right)\right) \nonumber \\
&= \nu(x') + O\left(\frac{\xi'(u_0)}{\nu(x')}\right) = \nu(x') +O\left(\tfrac{1}{\log p_n}\right), \label{xinuapprox}
\end{align}
and, from the functional equation \eqref{nufunceq} for $\nu(x)$, that 
\begin{align}
\nu(x') = \log\left(1+\sqrt{\nu(x')\log x'} -\nu(x')\right) &= \log\left(\log p_n -\nu(x') + O(1)\right))\nonumber \\
&=\log \log p_n +o(1) \label{nuapprox}
\end{align}
we can conclude, by combining \eqref{buchdiff} and \eqref{localdiff}, and using \eqref{simpshift} that
\begin{align}
\frac{p_{n+k}-p_n}{k} &\geq \left(\frac{\rho(2-\alpha)}{2-\alpha}+O\left(\frac{1}{u_0}\right)\right)\frac{\rho(u-1)}{\rho(u_0)}\nonumber \\
&= \left(\frac{\rho(2-\alpha)}{2-\alpha}+O\left(\frac{1}{u_0}\right)\right)u_0\xi(u_0) \nonumber \\
&=\left(\frac{\rho(2-\alpha)}{2-\alpha}\right)\frac{\xi(u_0) \log x'}{\sqrt{\nu(x')\log x'}} +O(\xi(u_0)) \nonumber \\
&= \left(1+O\left(\frac{\log \log p_n}{\log p_n}\right)\right)\frac{\rho(2-\alpha)\log p_n}{2-\alpha}. \label{ppspacing}
\end{align} 
\end{proof}

As a corollary, we see that for any sufficiently large pair of twin primes, or consecutive primes with any fixed gap, the smaller of the pair will never be a popular prime.  In fact, approximating $\rho(2)/2 = 0.153\ldots$ we have the following stronger result, which is Theorem \ref{intro:gpy} in the introduction.

\begin{corollary}
Given any two sufficiently large consecutive primes, $p<q$, if the gap between them, $q-p$, is less than $0.153\log p$, then $p$ is not a popular prime.
\end{corollary}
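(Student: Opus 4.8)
The plan is to argue by contradiction, running the argument behind Theorem~\ref{spacingthm} in the special case $k=1$, but with one simplification: since we are only told that the \emph{smaller} prime $p=p_n$ is popular, we may work at a single witnessing value of $x$ and never need $q=p_{n+1}$ to be popular. So suppose $p=p_n$ is popular on some interval $[2,x_n]$, put $q=p_{n+1}$ and $u_0=\tfrac{\log x_n}{\log p}-1$. By Theorem~\ref{llqest} applied at $x=x_n$ we have $\log p=\sqrt{\nu(x_n)\log x_n}+\tfrac14+O(1/\nu(x_n))$, so the manipulations leading to \eqref{xinuapprox} and \eqref{nuapprox} give $u_0\to\infty$, $\xi(u_0)=\nu(x_n)+o(1)=\log\log p+o(1)$ and $u_0\xi(u_0)=(1+o(1))\log p$. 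In particular \eqref{simpshift} (equivalently Lemma~\ref{shift}) yields $\rho(u_0-1)=(1+o(1))\,\rho(u_0)\log p$.

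Next I would use popularity directly: since $p$ is popular on $[2,x_n]$, $\Psi\!\left(\tfrac{x_n}{p},p\right)\ge\Psi\!\left(\tfrac{x_n}{q},q\right)$; applying the Buchstab identity of Lemma~\ref{genbuch} with $k=1$ to the right-hand side and rearranging gives the key inequality
\[
\Psi\!\left(\tfrac{x_n}{p},p\right)-\Psi\!\left(\tfrac{x_n}{q},p\right)\ \ge\ \Psi\!\left(\tfrac{x_n}{q^2},q\right).
\]
By \eqref{hildapprox} and the first paragraph the right-hand side is $(1+o(1))\tfrac{x_n}{q^2}\rho(u_0-1)=(1+o(1))\tfrac{x_n}{q^2}\rho(u_0)\log p$. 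The left-hand side counts the $p$-smooth integers in the short interval $\bigl(\tfrac{x_n}{q},\tfrac{x_n}{p}\bigr]$, whose length is $z=\tfrac{x_n(q-p)}{pq}=\tfrac{x_n p^{\alpha-1}}{q}$ with $\alpha=\tfrac{\log(q-p)}{\log p}$; bounding it by Hildebrand's estimate \eqref{psiloc} exactly as in the computation \eqref{localdiff} (one checks $1\le z\le x_n/q$ and that all scales lie in Hildebrand's range) gives the upper bound $(1+o(1))\tfrac{(2-\alpha)\rho(u_0)}{\rho(2-\alpha)}\cdot\tfrac{x_n(q-p)}{pq}$. Combining the two estimates, cancelling the common factor $\tfrac{x_n\rho(u_0)}{pq}$ and using $p/q=1+o(1)$ leaves
\[
q-p\ \ge\ \bigl(1+o(1)\bigr)\,\frac{\rho(2-\alpha)}{2-\alpha}\,\log p.
\]

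Finally, the hypothesis $q-p<0.153\log p$ forces $\log(q-p)<\log(0.153\log p)<\log\log p$, hence $\alpha=\tfrac{\log(q-p)}{\log p}=o(1)$, and by continuity $\tfrac{\rho(2-\alpha)}{2-\alpha}\to\tfrac{\rho(2)}{2}=\tfrac{1-\log2}{2}=0.15342\ldots$. Thus $q-p\ge(0.15342\ldots+o(1))\log p$, which for all sufficiently large $p$ strictly exceeds $0.153\log p$, contradicting the assumption; so $p$ is not a popular prime. The one point to be careful about is quantitative: every error term in the chain (from Hildebrand's approximation and short-interval bound, from Lemma~\ref{shift}, and from the asymptotics in Theorem~\ref{llqest}) must be verified to be $o(1)$ so that the genuine but narrow margin between $0.15342$ and $0.153$ survives. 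The asymmetry---that it is always the \emph{smaller} of the two close primes that is excluded---is built into the Buchstab identity, which peels off the \emph{largest} prime factor and so produces the ``extra'' term $\Psi(x_n/q^2,q)$ of size $\asymp\rho(u_0)\log p$, a term that the comparatively tiny short-interval count $\Psi(x_n/p,p)-\Psi(x_n/q,p)$ cannot match once the gap $q-p$ is below $(\rho(2)/2)\log p$.
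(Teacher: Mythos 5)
Your proof is correct, and while it uses exactly the same toolkit as the paper's Theorem \ref{spacingthm} (the Buchstab identity of Lemma \ref{genbuch}, Hildebrand's short-interval bound \eqref{psiloc}, and the $\rho(u-1)/\rho(u)\sim u\xi(u)\sim\log p$ asymptotics via Theorem \ref{llqest}), the logical structure differs in one genuinely useful way. The paper deduces the corollary from Theorem \ref{spacingthm}, whose hypothesis is that \emph{both} $p_n$ and $p_{n+k}$ are popular: popularity of each at its own witness $x_n$, $x_{n+k}$ is used to manufacture an intermediate $x'$ at which $\Psi(x'/p_{n+k},p_{n+k})=\Psi(x'/p_n,p_n)$ exactly, and the argument then runs at that $x'$. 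Read literally, the contrapositive of that theorem with $k=1$ only says that $p$ and $q$ are not \emph{both} popular, which is weaker than the corollary's conclusion that the smaller prime $p$ is not popular. Your version repairs this: you work at the single witness $x_n$ where $p$ is popular, use only the one-sided inequality $\Psi(x_n/p,p)\ge\Psi(x_n/q,q)$, and note that Buchstab turns this into $\Psi(x_n/p,p)-\Psi(x_n/q,p)\ge\Psi(x_n/q^2,q)$, which is the direction needed for the lower bound on $q-p$. So your argument both specializes to $k=1$ and weakens the hypothesis to popularity of $p$ alone, which is precisely what the corollary as stated requires; the paper's route buys the more general average-gap statement for runs of $k$ popular primes, at the cost of needing popularity at both ends. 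Your numerics ($\rho(2)/2=(1-\log 2)/2=0.15342\ldots>0.153$, and $\alpha=o(1)$ because $q-p\ll\log p$) and the range checks for \eqref{psiloc} are all in order.
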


Goldston, Pintz and Y{\i}ld{\i}r{\i}m \cite{GPY} have shown that for any fixed $\eta$, there is a positive proportion of prime numbers, $p$, which are followed by a gap less than $\eta \log p$, which means we can conclude the following, Corollary \ref{intro:pospro} from the introduction, as well. 

\begin{corollary}
A positive proportion of the prime numbers are not popular.
\end{corollary}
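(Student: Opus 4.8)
The plan is to feed the small-gaps theorem of Goldston, Pintz and Y{\i}ld{\i}r{\i}m into the criterion just established: the preceding corollary says that a sufficiently large prime $p$ whose successor $q$ satisfies $q - p < 0.153\log p$ cannot be a popular prime, so it suffices to exhibit a positive-density set of primes with this property, and GPY provides exactly such a set.

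Concretely, I would fix the constant $\eta = 0.153$, noting that this is genuinely below $\rho(2)/2 = \tfrac{1}{2}(1-\log 2) = 0.1534\ldots$, so that the strict inequality in the hypothesis of the previous corollary has room to spare. By the GPY result quoted above, the set $S$ of indices $n$ for which $p_{n+1} - p_n < \eta\log p_n$ contains a positive proportion of all primes. For every sufficiently large $n \in S$ the previous corollary applies with $p = p_n$ and $q = p_{n+1}$, showing that $p_n$ is not popular. Discarding the finitely many small primes in $S$ affects neither the positivity of the density nor the conclusion, so a positive proportion of primes fail to be popular.

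The argument is essentially a bookkeeping exercise once the two ingredients are in place, so I do not expect a genuine analytic obstacle; the only points requiring attention are the elementary numerical check that $0.153 < \rho(2)/2$ (so that the hypothesis of the previous corollary is met with a strict inequality) and making sure the GPY input is invoked in its ``positive proportion of gaps below $\eta\log p$'' form, which is precisely the form cited in the sentence preceding the corollary, rather than merely as the statement $\liminf_n (p_{n+1}-p_n)/\log p_n = 0$.
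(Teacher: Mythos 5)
Your proposal is correct and is exactly the paper's argument: the paper likewise combines the preceding corollary (a sufficiently large prime followed by a gap smaller than $0.153\log p < \rho(2)\log p/2$ cannot be popular) with the Goldston--Pintz--Y{\i}ld{\i}r{\i}m theorem that for any fixed $\eta>0$ a positive proportion of primes $p$ are followed by a gap less than $\eta\log p$. Your attention to taking $\eta=0.153$ strictly below $\rho(2)/2=\tfrac12(1-\log 2)=0.1534\ldots$ and to discarding finitely many small primes matches the paper's (implicit) bookkeeping.
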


Note that if we assume that the smooth numbers are regularly distributed in all of the short intervals that we are concerned with in the proof of Theorem \ref{spacingthm} we can do much better.  Assuming, as is widely conjectured, that \begin{equation}
\Psi(x+z,y)-\Psi(x,y) \sim \frac{z}{x}\Psi(x,y)
\end{equation}
for $y\sim\exp\left(\sqrt{\nu(x)\log x}+\frac{1}{4}\right)$ and $z >x/y^2$, we could show, by the method of Theorem \ref{spacingthm}, that the average gap between any two popular primes $p$ and $q$, $p<q$, must be asymptotically equal to $\log q$, and thus that the popular primes have relative density 0 among the primes.

\section{Computations and the Convex Primes} \label{data}
Compiling a list of the popular primes is computationally difficult, as it requires counting all of the largest prime divisors of integers up to relatively large values of $x$ compared to the popular primes themselves. The first few popular primes (popular on some interval $[2,x]$ for some $x \leq $70,000,000,000,000) and the integer $x$ for which they
were first popular on the interval $[2,x]$ are given in the table below.
Note that thus far no prime has been a popular
prime without being the uniquely popular prime on
some such interval.  Further, the table gives the
count of the number of times the prime occurs as the largest prime divisor of an integer in the interval $[2,x]$.

\medskip
\begin{center}
\textbf{Primes popular on some interval $[2,x]$ for $x \leq  10^{14}$ }\nopagebreak \\
\smallskip
\begin{tabular}{|c|c|c|c|c|c|}
\hline
\shortstack{Popular\\Prime} & \shortstack{First popular\\ on $[2,x]$} & \shortstack{First uniquely\\popular} & \shortstack{Last popular\\on $[2,z]$ } &\begin{minipage}{5mm}$C(x)$\\\vspace{-1mm}\end{minipage}   &  \begin{minipage}{5mm}$C(z)$\\\vspace{-1mm}\end{minipage}  \\
\hline 
2	&	2	&	2	&   17      &	1   &   4\\
3	&	3	&	12	&   119     &	1   &   14\\
5	&	45	&	80	&   279 	&   8   &   25\\
7	&	70	&	196	&   1858    &	10  &   77\\
13	&	1456	&	1638	&   5471    &	67  &   151\\
19	&	4845	&	4864	&	29301   &   140 &   428\\
23	&	20332	&	22425	&	53474   &   344 &   616\\
31	&	46345	&	46500	&	117303  &   563 &   1005\\
43	&	106812	&	109779	&	220523  &   947 &   1517\\
47	&	153032	&	158625	&	611374  &   1197    &   2902\\
73	&	592760	&	603564	&	2642391 &   2846    &   7664\\
83	&	2484190	&	2552416	&	2672025 &   7357    &   7722\\
109	&	2620033	&	2620142	&	2952463 &   7621    &   8284\\
113	&	2623860	&	2627250	&	41192601    &   7629    &   48380\\
199	&	41163150	&	41163747	&   237611044   &	48357   &   161644\\
283	&	237321819	&	237398795	&	1967277194  &   161507  &   698074\\
467	&	1966462280	&	1966466950	&	13692930957 &   697875  &   2761234\\
661	&	13690728506 &   13690729828	&	64358549949 &   2760913 &   8357693\\
773	&	64322151699 &   64322158656	&	79880100420 &   8354317 &   9758410\\
887	&	79838726306 &   79838739611	&	220369251374    &   9754751 &   20285553\\
1109	&	220355977754    &   220355987735	&   232880841877    &	20284680    &   21123128\\
1129	&	232268764689    &   232268774850	&   618765808209    &	21082412    &   43031555\\
1327	&	618745965579    &   618745972214	&	1882062587041   &   43030537    &   96835113\\
1627	&	1882062393429   &   1882062476406	&	9607847299025   &   96835105    &   318539488\\
2143	&	9607711921430   &   9607713772982	&	19364476224949  &   318536223   &   534261087\\
2399	&	19364051434020  &   19364051829855	&	26396066576762  &   534252383   &   672081919\\
2477	&	26393150922356  &   26393150937218	&	37636861534247  &   672026918   &   873949289\\
2803	&	37636607775855  &   37636607806688	&	84128837898779  &   873944930   &   1588958920\\
2861    &   84128837864448  &   84128837898780  &   85992223800357  &   1588958920  &   1612740571\\
2971    &   85992223734996  &   85992223800358  &   89487767416445     &   1612740571  & 1656313907  \\
3023    &   89487767413423  &   89487767416446  &   90749798232275  &   1656313907 &    1672851087\\
3041    &   90749798153210  &   90749798232276  &   91157523869191  &   1672851087 &    1678444884\\
3049    &   91015395545226  &   91015395548275  &   91473520711546  &   1676495503 &    1682728352\\
3089    &   91473520705369  &   91473520711547  &   92913565436551  &   1682728352 &    1699108828\\
3137    &   93871134565472  &   93871134606253  &   94131107722837  &   1708870682  &   1712113344\\
3373    &   94131107675616  &   94131107722838  &   $>10^{14}$        &   1712113344  &   $>1791544685$ \\
\hline
\end{tabular}
\smallskip
\end{center}

Note that the ranges of popularity for 73, 83, 109 and 113 all overlap, and in fact all four are popular on the interval $[2,2626355]$, each occurring 7634 times.

Thus far, the data for the popular primes appear to be related to a subset of the prime numbers studied by Pomerance \cite{PNG} and Tutaj \cite{Tu} and also discussed in Guy's book of unsolved problems in number theory \cite[Problem A14]{guy}.   This set, the \lq\lq convex primes,'' is the set of those prime numbers numbers, $p_n$, which form the vertices of the boundary of the convex hull of the points $(n,p_n)$ in the plane.   Pomerance uses this set of primes to show that there are infinitely many primes $p_n$ which satisfy the inequality \begin{equation*}2p_n < p_{n-i} + p_{n+i} \hspace{5mm} \text{ for all positive $i < n$.}\end{equation*}

Using the best known error term for the prime number theorem, Pomerance claims that there are at least $\exp(c(\log x)^{3/5-\epsilon})$ convex primes up to $x$ for any $\epsilon>0$ and some constant $c>0$.  Assuming the Riemann hypothesis gives at least $c'x^{1/4}/\log^{3/2} x$ convex primes. 

The values of the popular primes computed above are a superset of the convex primes: all of the convex primes less than 3000 are also popular.  Furthermore, all of those primes, $p_n$, where the point $(n,p_n)$ lies on the boundary of the convex hull but is not a vertex point of it (namely  5, 13, 23, 31 and 43) are popular as well.  The popular primes $83,$ $109,773,1109,2143,2399,2477,2861,2971,3023,$ $3041,3049,3089,3137$ and 3373 correspond to points on the interior of the convex hull, however.  

Both convex primes and popular primes are more likely to be found after a run of densely packed primes, and prior to a larger than average gap betwen primes, which partially justifies the connection.  If one assumes that the convex primes continue to be a subset of the popular primes, then we would expect the count of the popular primes up to $x$ to be at least $x^{1/4}/\log^{3/2} x$, substantially better than what we are able to prove in Corollary \ref{lbound}.   In a forthcoming paper we will further discuss the convex primes, including a significantly improved upper bound for their count.

\section{Optimization of factoring algorithms: making squares} \label{sec:fast}
As mentioned in the introduction, the analysis done here is closely related to a key step in the analysis of the running time of a variety of factoring algorithms.  In particular,  one wishes to choose an optimal smoothness bound $y$ so as to minimize the number of random integers that must be chosen from the interval $[1,x]$ before the product of some subset of the integers chosen at random is a square.  When some subset of the integers has this property we say that the set has a \textit{square dependence.}  Since the probability an integer chosen at random from the interval $[1,x]$ is $y$-smooth is $\frac{x}{\Psi(x,y)}$, and any set of $\pi(y)+1$ $y$-smooth integers contains a square dependence, it is advantageous to pick a value of $y$ which minimizes the expression $\frac{x\pi(y)}{\Psi(x,y)}$, or equivalently maximizes \begin{equation}
\frac{\Psi(x,y)}{\pi(y)} = \left(1+O\left(\frac{1}{\log y}\right)\right)\frac{\Psi(x,y)\log y}{y} \approx \frac{\Psi(x,y)}{y}.
\end{equation}

The analysis of the maximum value of $\frac{\Psi(x,y)}{y}$ is highly similar to the analysis of the peak value of $\Psi\left(\frac{x}{p},p\right)$ performed in Section \ref{results}.  In fact, maximizing $\frac{\Psi(x,y)}{y}$ requires maximizing the same expression \eqref{saiasone} as in the proof of Theorem \ref{llqest}, with the modification that now $u = \frac{\log x}{\log p}$, rather than that value shifted by one.  One thus finds that after suitably modifying the implicitly defined function $\nu(x)$ used in the proof, replacing it instead with the function $\omega(x) = \xi\left(\sqrt{\frac{\omega(x)}{\log x}}\right)$, which satisfies the functional equation \begin{equation}
e^{\omega(x)} = 1+\sqrt{\omega(x)\log x}, 
\end{equation}
and, like $\nu(x)$ is given approximately by
\begin{equation}
\omega(x) = \tfrac{1}{2}\log \log x + \tfrac{1}{2}\log \log \log x -\tfrac{1}{2}\log 2 + o(1) 
\end{equation} as $x \to \infty$, the exact same analysis goes through and one obtains the following.
\begin{theorem} \label{thm:roughfast}
If, for a given value of $x$, the prime $p$ maximizes the expression $\frac{\Psi(x,p)}{p}$, then 
\begin{equation}
p = \exp\left\{\sqrt{\omega(x)\log x}+\frac{1}{4}\left(1-\frac{\omega(x)-3}{2\omega(x)^2 -3\omega(x)+1}\right)\right\}\left(1+ O\left(\left(\frac{\log \log x}{\log x}\right)^{1/4}\right)\right)\label{roughfastasym}. \end{equation}
\end{theorem}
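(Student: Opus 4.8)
The plan is to reproduce, essentially verbatim, the three-step argument proving Theorem~\ref{llqest}, tracking the single structural change that the parameter $u$ is now $\frac{\log x}{\log p}$ rather than $\frac{\log x}{\log p}-1$, and that $\nu(x)$ is replaced throughout by $\omega(x)$. The starting point is that for $p$ in the Hildebrand range, Saias' approximation \eqref{saiasappone} gives
\[
\frac{\Psi(x,p)}{p}=\frac{x}{p}\kappa(x,p)\Bigl(1+O\bigl((\tfrac{\log\log x}{\log p})^{2}\bigr)\Bigr)=\frac{x}{p}\rho(u)\Bigl(1+\tfrac{1-\gamma}{\log p}\xi(u)+O(u^{-2})\Bigr)
\]
with $u=\frac{\log x}{\log p}$, so the quantity to maximize is literally expression \eqref{saiasone} with this redefined $u$. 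I would set $P_0=\exp\{\sqrt{\omega(x)\log x}\}$ and $u_0=\frac{\log x}{\log P_0}=\sqrt{\log x/\omega(x)}$, so that $\omega(x)=\xi(u_0)$ by \eqref{xi}; this is precisely the relation $e^{\omega(x)}=1+\sqrt{\omega(x)\log x}$, whose standard asymptotic solution is the stated $\omega(x)=\tfrac12\log\log x+\tfrac12\log\log\log x-\tfrac12\log2+o(1)$.

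\textbf{Localization.} First, \eqref{alladiapprox} with \eqref{xiint}, exactly as in \eqref{peakheight}, yields $\frac{\Psi(x,P_0)}{P_0}\gg x\exp\{-2\sqrt{\omega(x)\log x}\}$; meanwhile the trivial bound $\frac{\Psi(x,q)}{q}<\frac{x}{q}$ rules out $q>\exp\{(2+\epsilon)\sqrt{\omega(x)\log x}\}$ and the elementary bound $\Psi(x,y)\ll x\exp\{-\tfrac{\log x}{2\log y}\}$ rules out $q<\exp\{\tfrac{1}{4+\epsilon}\sqrt{\log x/\omega(x)}\}$, so any maximizer $p$ satisfies the analogue of \eqref{roughapprox} and lies where \eqref{hildapprox} and \eqref{alladiapprox} are valid. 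Next, for a prime $q$ in that range with $|\log q-\log P_0|>2\omega(x)$, I would form $\frac{\Psi(x,P_0)/P_0}{\Psi(x,q)/q}=\frac{q}{P_0}\sqrt{\xi'(u_0)/\xi'(u_q)}\exp\{\int_{u_0}^{u_q}\xi(t)\,dt\}(1+o(1))$ and split into $q<P_0$ and $q>P_0$, exactly as in the proof of Theorem~\ref{llqest}: using $\xi(u_0)=\omega(x)=\log^2 P_0/\log x$ and $u_0=\log x/\log P_0$, the relevant exponent reduces to $\log q-\log P_0+\frac{\log^2 P_0}{\log q}-\log P_0$, strictly decreasing in $\log q$ for $q<P_0$, hence minimized at $\log P_0-\log q=2\omega(x)$ where it equals $\frac{4\omega(x)^2}{\log P_0-2\omega(x)}$, which asymptotically beats the error term; for $q>P_0$ the extra factor $\sqrt{\xi'(u_0)/\xi'(u_q)}\exp\{\tfrac12(u_0-u_q)^2\xi'(u_0)\}$ is handled by the same dichotomy on $\delta=u_0-u_q$ (either $\delta>\sqrt{u_0}$, or $1.99\le\delta\le\sqrt{u_0}$, forcing a factor $>1$). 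This gives the analogue of \eqref{firstapprox}: $\log p=\sqrt{\omega(x)\log x}+O(\log\log x)$.

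\textbf{Optimization.} Write $s=\exp\{\sqrt{\omega(x)\log x}+c\omega(x)\}$, $|c|\le2$, $u_s=\frac{\log x}{\log s}$. Since $\sqrt{\omega(x)/\log x}=1/u_0$ here, $u_s-u_0=-c+\frac{c^2}{u_0}+O(c^3/u_0^2)$. Expanding $\rho(u_s)/\rho(u_0)$ with Lemma~\ref{shift} (using $\omega(x)=\xi(u_0)$), and checking as in \eqref{thirdpsiratterm} that the $\xi$-correction term from \eqref{saiasone} contributes only $1+O(u_0^{-2})$, the ratio $\frac{\Psi(x,s)/s}{\Psi(x,P_0)/P_0}$ becomes the exponential of a quadratic in $c$ of the same shape as \eqref{combinedrat}, whose maximizer is
\[
c=\frac{1}{4\omega(x)}\left(1-\frac{\omega(x)-3}{2\omega(x)^2-3\omega(x)+1}\right)+O\!\left(\frac{1}{\sqrt{\omega(x)\,u_0}}\right);
\]
replacing $u_0+1$ by $u_0$ at the handful of places it occurs alters only terms already absorbed in this error. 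Substituting this $c$ into the definition of $s$ yields \eqref{roughfastasym}.

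\textbf{The main obstacle.} There is no genuinely new difficulty: the entire content is that every estimate in the proof of Theorem~\ref{llqest} survives the substitution $u=\frac{\log x}{\log p}$ and the replacement $\nu\to\omega$. The one point needing care is that the two structural mismatches introduced --- $u_0$ in place of $u_0+1$ at several spots, and the application of \eqref{hildapprox} and \eqref{saiasappone} to $\Psi(x,p)$ with upper argument $x$ rather than to $\Psi(x/p,p)$ --- perturb only quantities that already sat inside error terms (for instance $\omega(x)/u_0^2\ll(\omega(x)\,u_0)^{-1/2}$ for large $x$, since $\omega(x)^3\ll\log x$). Once that is confirmed the quadratic in $c$, and therefore its maximizer, is identical to the one in the proof of Theorem~\ref{llqest}, which is exactly \eqref{roughfastasym}.
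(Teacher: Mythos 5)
Your proposal is correct and is exactly the argument the paper intends: the paper gives no separate proof of Theorem \ref{thm:roughfast}, asserting only that the analysis of Theorem \ref{llqest} goes through verbatim with $u=\frac{\log x}{\log p}$ in place of $\frac{\log x}{\log p}-1$ and $\omega(x)$ in place of $\nu(x)$, which is precisely what you carry out. Your filled-in details (the localization, the relation $\omega(x)=\xi(u_0)$ with $u_0=\sqrt{\log x/\omega(x)}$, and the check that the $u_0$ versus $u_0+1$ discrepancies live inside the error terms) are consistent with the paper's sketch.
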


Comparing the functions $\nu(x)$ and $\omega(x)$, we find that \begin{align}
\nu(x)-\omega(x) &= \log(\sqrt{\nu(x)\log x}-\nu(x)+1) - \log(\sqrt{\omega(x)\log x}+1)\nonumber \\
&=\frac{1}{2}\log \nu(x) - \frac{1}{2} \log \omega(x) + \log\left(1 - \sqrt{\frac{\nu(x)}{\log x}}\right)  + O\left(\frac{1}{\sqrt{\log x \log \log x}}\right)\nonumber \\
&= -\sqrt{\frac{\nu(x)}{\log x}} + \frac{1}{2}\log\left(1 + \frac{\nu(x) - \omega(x)}{\omega(x)}\right) + O\left(\frac{1}{\sqrt{\log x \log \log x}}\right)\nonumber \\
&= -\sqrt{\frac{\nu(x)}{\log x}}  + O\left(\frac{1}{\sqrt{\log x \log \log x}}\right). \label{vw}
\end{align}
We can use this to restate Theorem \ref{thm:roughfast} in terms of the function $\nu(x)$ for comparison to Theorem \ref{llqest}.
\begin{corollary}
If, for a given value of $x$, the prime $p$ maximizes the expression $\frac{\Psi(x,p)}{p}$, then 
\begin{equation}
p = \exp\left\{\sqrt{\nu(x)\log x}+\frac{3}{4} + O\left(\frac{1}{\log \log x}\right)\right\}. \end{equation}
\end{corollary}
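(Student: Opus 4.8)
The plan is to obtain this corollary directly from Theorem~\ref{thm:roughfast} by feeding the comparison \eqref{vw} between $\omega(x)$ and $\nu(x)$ into the asymptotic formula \eqref{roughfastasym}. First I would simplify the correction term in \eqref{roughfastasym}: since $\omega(x) = \tfrac12\log\log x + O(\log\log\log x) \to \infty$, the quotient $\frac{\omega(x)-3}{2\omega(x)^2 - 3\omega(x)+1}$ is $O(1/\omega(x)) = O(1/\log\log x)$, so $\frac14\bigl(1 - \frac{\omega(x)-3}{2\omega(x)^2 - 3\omega(x)+1}\bigr) = \frac14 + O(1/\log\log x)$. Next, taking logarithms in \eqref{roughfastasym}, the multiplicative factor $1 + O\bigl((\log\log x/\log x)^{1/4}\bigr)$ contributes only $O\bigl((\log\log x/\log x)^{1/4}\bigr)$ to $\log p$; since $(\log\log x)^{5/4}/(\log x)^{1/4}\to 0$, this is $o(1/\log\log x)$ and can be absorbed into the final error term. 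So everything reduces to proving $\sqrt{\omega(x)\log x} = \sqrt{\nu(x)\log x} + \tfrac12 + O(1/\log\log x)$.

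For that step I would rewrite \eqref{vw} as $\omega(x) = \nu(x) + \sqrt{\nu(x)/\log x} + O\bigl(1/\sqrt{\log x\log\log x}\bigr)$, multiply through by $\log x$ to get $\omega(x)\log x = \nu(x)\log x + \sqrt{\nu(x)\log x} + O\bigl(\sqrt{\log x/\log\log x}\bigr)$, and then factor out $\nu(x)\log x \asymp \log x\log\log x$ before taking the square root. Applying $\sqrt{1+t} = 1 + \tfrac{t}{2} + O(t^2)$ with $t = 1/\sqrt{\nu(x)\log x} + O\bigl(1/(\nu(x)\sqrt{\log x\log\log x})\bigr)$ yields $\sqrt{\omega(x)\log x} = \sqrt{\nu(x)\log x} + \tfrac12 + O\bigl(1/\sqrt{\nu(x)\log x}\bigr)$, and since $\sqrt{\nu(x)\log x} \asymp \sqrt{\log x\log\log x} \gg \log\log x$ the remaining error is $O(1/\log\log x)$. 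Combining this with the simplification of the correction term gives $\log p = \sqrt{\nu(x)\log x} + \tfrac12 + \tfrac14 + O(1/\log\log x)$, which is exactly the stated bound after exponentiating.

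There is no genuine obstacle here: every ingredient — Theorem~\ref{thm:roughfast} and the estimate \eqref{vw} — is already in hand, and the work is purely a matter of tracking error terms. The one place that warrants care is verifying that the error $O\bigl(1/\sqrt{\log x\log\log x}\bigr)$ in \eqref{vw}, once multiplied by $\log x$ and then passed back through the square root, genuinely shrinks to $O(1/\log\log x)$ rather than growing, and that the weaker multiplicative error $O\bigl((\log\log x/\log x)^{1/4}\bigr)$ inherited from \eqref{roughfastasym} is still small enough to be swallowed by $O(1/\log\log x)$; both are straightforward once the magnitudes $\nu(x) \asymp \log\log x$ and $\nu(x)\log x \asymp \log x \log\log x$ are kept in mind.
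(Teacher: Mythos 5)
Your proposal is correct and follows essentially the same route as the paper: the paper's proof consists precisely of deriving $\sqrt{\omega(x)\log x} = \sqrt{\nu(x)\log x} + \tfrac12 + O\left(\frac{1}{\log\log x}\right)$ from \eqref{vw} and substituting into \eqref{roughfastasym}. Your additional bookkeeping (the correction term being $\tfrac14 + O(1/\log\log x)$ and the multiplicative error being $o(1/\log\log x)$) is left implicit in the paper but is exactly the right verification.
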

\begin{proof}
Using \eqref{vw}, we see that 
\begin{align}
\sqrt{\omega(x)\log x} &= \sqrt{\left(\nu(x)+\sqrt{\frac{\nu(x)}{\log x}} + O\left(\frac{1}{\sqrt{\log x \log \log x}}\right)\right) \log x}\nonumber \\
&= \sqrt{\nu(x)\log x + \sqrt{\nu(x)\log x} + O\left(\sqrt{\frac{\log x}{\log \log x}}\right)}\nonumber \\
&= \sqrt{\nu(x)\log x} +\frac{1}{2} + O\left(\frac{1}{\log \log x}\right). \label{wvsqrtlogterm}
\end{align}
Using this approximation in \eqref{roughfastasym} the result follows. \end{proof}

The method of proof can also be adapted to maximize the function $\frac{\Psi(x,y)}{\pi(y)}$, which is slightly more relevant to the optimization of these factoring algorithms. Using the approximation $\pi(y) = \frac{y}{\log y}\left(1+\frac{1}{\log y} + O\left(\frac{1}{\log^2 y}\right)\right)$ we find that again, the analysis is nearly identical to that of Theorem \ref{llqest} with the function $\omega(x)$ used in place of $\nu(x)$.  However, instead of equation \eqref{psirat}, we find that we are maximizing the ratio

\begin{align}
\frac{\Psi\left(x,s\right)\pi(P_0)}{\Psi\left(x,P_0\right)\pi(s)}&= \frac{\frac{x \log s}{s}\rho(u_s)\left(1+\frac{1-\gamma}{\log s}\xi(u_s)\right)\left(1-\frac{1}{\log s}\right)}{\frac{x\log P_0}{P_0}\rho(u_0)\left(1+\frac{1-\gamma}{\log P_0}\xi(u_0)\right)\left(1-\frac{1}{\log P_0}\right)}\left(1+O\left(\frac{1}{u^2_0}\right)\right), 
\end{align}
where $u_0$ and $u_s$ have been suitably modified.

As before, the term \[ \frac{\left(1+\frac{1-\gamma}{\log s}\xi(u_s)\right)\left(1-\frac{1}{\log s}\right)}{\left(1+\frac{1-\gamma}{\log P_0}\xi(u_0)\right)\left(1-\frac{1}{\log P_0}\right)}\] can be absorbed into the error term, however the additional ratio of $\frac{\log s}{\log P_0}= \left(1+\frac{c}{u_0}\right)$ introduces an additional $\frac{c}{u_0}$ in the exponent of \eqref{combinedrat}.  

As a result, when we maximize $c$, we find that it now occurs for some $c$ satisfying \begin{equation}
c=\frac{3}{4\omega(x)} \left(1 - \frac{3\omega(x) -5}{6\omega(x)^2-9\omega(x)+3}\right) +O\left(\frac{1}{\sqrt{\omega(x)u_0}}\right).
\end{equation}
Thus we can conclude the following asymptotic, usefull in determining the optimal smoothness bound for use in integer factorization.
\begin{theorem}\label{thm:precfast}
If, for a given value of $x$, the prime $p$ maximizes the expression $\frac{\Psi(x,p)}{\pi(p)}$, then 
\begin{align}
p &= \exp\left\{\sqrt{\omega(x)\log x}+\frac{3}{4}\left(1-\frac{3\omega(x)-5}{6\omega(x)^2 -9\omega(x)+3}\right)\right\}\left(1+ O\left(\left(\frac{\log \log x}{\log x}\right)^{1/4}\right)\right) \nonumber \\ 
&= \exp\left\{\sqrt{\nu(x)\log x}+\frac{5}{4}+O\left(\frac{1}{\log \log x}\right)\right\}. \label{fastasymnu}
 \end{align}
\end{theorem}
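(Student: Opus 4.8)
The plan is to re-run the three-step argument of Theorem~\ref{llqest} with two systematic substitutions. Because we now count $\Psi(x,p)$ rather than $\Psi(x/p,p)$, the relevant ratio-variable is $u=\frac{\log x}{\log p}$ with no shift by one, so that $u_0=\frac{\log x}{\log P_0}=\sqrt{\frac{\log x}{\omega(x)}}$ when $P_0=\exp\{\sqrt{\omega(x)\log x}\}$; and the implicitly defined $\nu(x)$ is replaced everywhere by $\omega(x)=\xi\!\left(\sqrt{\frac{\log x}{\omega(x)}}\right)$, i.e.\ the solution of $e^{\omega(x)}=1+\sqrt{\omega(x)\log x}$. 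First I would establish the analogue of the rough localization \eqref{roughapprox}: the elementary bound $\Psi(x,y)\ll x\exp\{-\frac{\log x}{2\log y}\}$ handles primes that are too small, the bound $\Psi(x,y)/\pi(y)\le x/\pi(y)\ll x\log y / y$ handles primes that are too large, and the lower bound for $\Psi(x,P_0)/\pi(P_0)$ obtained from \eqref{alladiapprox} together with $e^{\omega(x)}=1+\sqrt{\omega(x)\log x}$ confines the maximizer to the range where \eqref{hildapprox} and \eqref{saiasappone} apply; an intermediate comparison in the style of the passage to \eqref{firstapprox} then forces $\log p=\sqrt{\omega(x)\log x}+O(\log\log x)$.

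For the precise location I would set $s=\exp\{\sqrt{\omega(x)\log x}+c\,\omega(x)\}$ with $|c|\le 2$ and $u_s=\frac{\log x}{\log s}$, and compute $\frac{\Psi(x,s)\pi(P_0)}{\Psi(x,P_0)\pi(s)}$ using \eqref{saiasone} and $\pi(y)=\frac{y}{\log y}\bigl(1+\frac{1}{\log y}+O(\frac{1}{\log^2 y})\bigr)$. This ratio factors as the product of: the Dickman contribution $\frac{P_0}{s}\cdot\frac{\rho(u_s)}{\rho(u_0)}$, which by Lemma~\ref{shift} (exactly as in \eqref{secondpsiratterm}, with $\omega$ for $\nu$) supplies the exponent $-c^2\omega(x)\bigl(\frac{1}{u_0+1}+\frac{1}{2u_0(\omega(x)-1)}\bigr)+\frac{c}{2u_0}\bigl(1+\frac{1}{(\omega(x)-1)^2}\bigr)+O\bigl(\frac{c^3\omega(x)+1}{u_0^2}\bigr)$; the Saias correction quotient $\frac{1+(1-\gamma)\xi(u_s)/\log s}{1+(1-\gamma)\xi(u_0)/\log P_0}$ and the $\pi$-correction quotient $\frac{1-1/\log s}{1-1/\log P_0}$, each equal to $1+O(u_0^{-2})$ by the argument of \eqref{thirdpsiratterm}; and the genuinely new factor $\frac{\log s}{\log P_0}=1+\frac{c}{u_0}+O(u_0^{-2})$ coming from the $\frac{\log y}{y}$ in $\pi(y)$, which adds $\frac{c}{u_0}$ to the exponent. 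Dividing the exponent by $\omega(x)\bigl(\frac{1}{u_0+1}+\frac{1}{2u_0(\omega(x)-1)}\bigr)$ as in Theorem~\ref{llqest}, one is left maximizing
\begin{equation*}
-c^2+\frac{c}{2\omega(x)}\left(\frac{3+\frac{1}{(\omega(x)-1)^2}}{\frac{u_0}{u_0+1}+\frac{1}{2(\omega(x)-1)}}\right)+O\!\left(\frac{c^3}{u_0}+\frac{1}{\omega(x)u_0}\right),
\end{equation*}
where the numerator $3$ (in place of the $1$ of Theorem~\ref{llqest}) records the extra $\frac{c}{u_0}$. Setting the derivative to zero and simplifying the rational function gives $c=\frac{3}{4\omega(x)}\bigl(1-\frac{3\omega(x)-5}{6\omega(x)^2-9\omega(x)+3}\bigr)+O\bigl(\frac{1}{\sqrt{\omega(x)u_0}}\bigr)$; substituting into the definition of $s$ yields the first displayed formula of \eqref{fastasymnu}, the relative error propagating exactly as in the proof of Theorem~\ref{llqest}.

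The second line of \eqref{fastasymnu} follows by converting $\omega(x)$ to $\nu(x)$. From \eqref{vw} one has $\omega(x)=\nu(x)+\sqrt{\nu(x)/\log x}+O\bigl((\log x\log\log x)^{-1/2}\bigr)$, hence (as in \eqref{wvsqrtlogterm}) $\sqrt{\omega(x)\log x}=\sqrt{\nu(x)\log x}+\tfrac12+O(\tfrac{1}{\log\log x})$, while $c\,\omega(x)=\tfrac34\bigl(1-\frac{3\omega(x)-5}{6\omega(x)^2-9\omega(x)+3}\bigr)=\tfrac34+O(\tfrac{1}{\log\log x})$ because $\omega(x)\to\infty$; adding $\tfrac12+\tfrac34=\tfrac54$. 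The cruder relative error of the first line is $o(1/\log\log x)$ and so is absorbed into the exponential error $O(1/\log\log x)$.

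The main obstacle is not any single computation but the error bookkeeping in the middle step: one must verify that each of the correction quotients — the Saias $(1-\gamma)$-term, the secondary $\frac{1}{\log y}$-term of $\pi(y)$, and the relative error $O((\log\log x/\log p)^2)$ of \eqref{saiasappone} — is genuinely of smaller order than the $\frac{c}{u_0}\asymp\frac{1}{\omega(x)u_0}$ signal being extracted, and in particular that $\xi(u_s)-\xi(u_0)=\xi'(u_0)(u_s-u_0)+\cdots$ appearing in the Saias quotient is $O(u_0^{-2})$ rather than merely $O(u_0^{-1})$; this uses the precise asymptotics \eqref{xiprime} together with the analogue of \eqref{udiff} for the unshifted variable $u_0=\sqrt{\log x/\omega(x)}$. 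The rough-localization step also needs a minor re-checking of constants because of the unshifted variable, but is otherwise routine.
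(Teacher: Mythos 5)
Your proposal is correct and follows essentially the same route as the paper: rerun the three-step argument of Theorem \ref{llqest} with the unshifted variable $u=\frac{\log x}{\log p}$ and $\omega(x)$ in place of $\nu(x)$, absorb the Saias and $\pi(y)$ secondary corrections into the error term, track the extra factor $\frac{\log s}{\log P_0}=1+\frac{c}{u_0}+O(u_0^{-2})$ which turns the numerator $1+\frac{1}{(\omega(x)-1)^2}$ into $3+\frac{1}{(\omega(x)-1)^2}$, and then convert to $\nu(x)$ via $\sqrt{\omega(x)\log x}=\sqrt{\nu(x)\log x}+\tfrac12+O(\tfrac{1}{\log\log x})$. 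Your optimization of the resulting quadratic in $c$ reproduces the paper's value $c=\frac{3}{4\omega(x)}\bigl(1-\frac{3\omega(x)-5}{6\omega(x)^2-9\omega(x)+3}\bigr)+O\bigl(\frac{1}{\sqrt{\omega(x)u_0}}\bigr)$ exactly, and your identification of the error bookkeeping as the main point needing care matches the structure of the paper's argument.
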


Note that \eqref{fastasymnu} implies that in the limit as $x \to \infty$, the ratio of the prime, $p$, which maximizes $\frac{\Psi(x,p)}{\pi(p)}$ to a prime popular on $[2,x]$ tends to $e$.

Having estimated the value of $y$ which maximizes $\frac{\Psi(x,y)}{\pi(y)}$ relatively precisely, we can likewise give an estimate for the maximum value of this function.  Note that the maximum value of this function is what plays a key role in the analysis of factoring algorithms.  Denote by $h(x)$ this maximum value of $\frac{\Psi(x,y)}{\pi(y)}$ taken over all $y<x$. Croot, Granville, Pemantle and Tetali showed \cite{CGPT} that if one  chooses integers at random between 1 and $x$ until the sequence contains a square dependence, then the expected stoping time lies in the interval $\left[(\frac{\pi e^{-\gamma}}{4} +o(1))\frac{x}{h(x)},(e^{-\gamma} +o(1))\frac{x}{h(x)}\right]$, and futhermore that as $x \to \infty$, the stopping time lies, almost surely in this interval. The only estimate that they give for $h(x)$, however, is that $h(x)=x\exp\left\{-\sqrt{(2+o(1))\log x \log \log x}\right\}$. (In their notation, $J_0(x)=\frac{x}{h(x)}$.) We give here an asymptotic expression for the value of this function, proving Theorem \ref{intro:fastpeak} in the introduction.

\begin{theorem}
For a given value of $x$, the value of $h(x)$, the maximum value of $\frac{\Psi(x,y)}{\pi(y)}$ for $y<x$ is given asymptotically by 
\begin{equation}
h(x) = \frac{x}{\sqrt{2\pi\log x}}\exp\left\{-2\sqrt{\omega(x)\log x}+ \int_{0}^{\omega(x)}\frac{e^s {-}1}{s}ds+\frac{3\omega(x)}{2}+\gamma+O\left(\frac{1}{\log \log x}\right)\right\} 
\end{equation}
or, equivalently, the same expression with $\nu(x)$ in place of $\omega(x)$,
\begin{align}
h(x) &= \frac{x}{\sqrt{2\pi\log x}}\exp\left\{-2\sqrt{\nu(x)\log x}+ \int_{0}^{\nu(x)}\frac{e^s {-}1}{s}ds+\frac{3\nu(x)}{2}+\gamma+O\left(\frac{1}{\log \log x}\right)\right\} \nonumber \\
&= C(x)\left(1+O\left(\frac{1}{\log \log x}\right)\right), \label{fastvsmode}
\end{align}
where $C(x)$, defined before Corollary \ref{cor:peakheight}, is the number of times a prime, popular on $[2,x]$, appears as the largest prime divisor of an integer on that interval.
\end{theorem}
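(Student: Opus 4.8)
The plan is to follow the proof of Theorem~\ref{cor:peakheight} nearly verbatim, with the optimal prime now supplied by Theorem~\ref{thm:precfast} in place of Theorem~\ref{llqest}. Since both $\Psi(x,y)$ and $\pi(y)$ are step functions, constant on each interval between consecutive primes, the maximum of $\frac{\Psi(x,y)}{\pi(y)}$ over $y<x$ is attained at a prime, so $h(x)=\frac{\Psi(x,p)}{\pi(p)}$ for the prime $p$ of Theorem~\ref{thm:precfast}. I would work with the $\omega(x)$-form of that theorem, which after taking logarithms reads $\log p=\sqrt{\omega(x)\log x}+\tfrac34+O(1/\omega(x))$, just as \eqref{papprox} read $\sqrt{\nu(x)\log x}+\tfrac14+O(1/\nu(x))$; as in the derivation of \eqref{papprox}, the relative error $O((\log\log x/\log x)^{1/4})$ is $o(1/\omega(x))$ and gets absorbed. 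Consequently $u:=\frac{\log x}{\log p}=\sqrt{\frac{\log x}{\omega(x)}}-\frac{3}{4\omega(x)}+O(1/\omega(x)^2)$, the analogue of \eqref{peaku} but without the shift by $1$.

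The next step is to evaluate Alladi's expansion \eqref{alladiapprox} at this $u$. From \eqref{xiprime} one gets $\xi(u)=\omega(x)+O(1/\sqrt{\omega(x)\log x})$, the leading term being $\omega(x)$ by the defining relation $e^{\omega(x)}=1+\sqrt{\omega(x)\log x}$, and $\xi'(u)=\sqrt{\omega(x)/\log x}\,(1+O(1/\omega(x)))$. Substituting the expansions of $u$ and $\xi(u)$ gives $u\xi(u)=\sqrt{\omega(x)\log x}-\tfrac34+O(1/\omega(x))$, while $\int_0^{\xi(u)}\frac{e^s-1}{s}\,ds=\int_0^{\omega(x)}\frac{e^s-1}{s}\,ds+O(1/\omega(x))$, since the discrepancy is $|\xi(u)-\omega(x)|$ times $\frac{e^{\omega(x)}-1}{\omega(x)}=\sqrt{\log x/\omega(x)}$. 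Hence
\[
\rho(u)=\frac{1}{\sqrt{2\pi}}\left(\frac{\omega(x)}{\log x}\right)^{1/4}\exp\left\{\gamma-\sqrt{\omega(x)\log x}+\tfrac34+\int_0^{\omega(x)}\frac{e^s-1}{s}\,ds+O\!\left(\tfrac{1}{\omega(x)}\right)\right\}.
\]

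Assembling $h(x)=\frac{\Psi(x,p)}{\pi(p)}$ is then routine: \eqref{hildapprox} gives $\Psi(x,p)=x\rho(u)(1+o(1/\omega(x)))$, and $\pi(p)=\frac{p}{\log p}(1+o(1/\omega(x)))$, so $h(x)=\frac{x\log p}{p}\rho(u)(1+o(1/\omega(x)))$. Plugging in $\log p$ and $\rho(u)$, the factor $e^{-3/4}$ coming from $1/p$ cancels the factor $e^{3/4}$ coming from $\rho(u)$, and $\sqrt{\omega(x)\log x}\,(\omega(x)/\log x)^{1/4}=\omega(x)^{3/4}(\log x)^{1/4}=\frac{e^{3\omega(x)/2}}{\sqrt{\log x}}(1+O(1/\sqrt{\omega(x)\log x}))$ by the functional equation for $\omega$; this yields precisely the stated formula in $\omega(x)$. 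The equivalent $\nu(x)$-form follows from \eqref{vw} and \eqref{wvsqrtlogterm}: one has $-2\sqrt{\omega(x)\log x}=-2\sqrt{\nu(x)\log x}-1+O(1/\log\log x)$, $\tfrac32\omega(x)=\tfrac32\nu(x)+o(1/\log\log x)$, and $\int_0^{\omega(x)}\frac{e^s-1}{s}\,ds=\int_0^{\nu(x)}\frac{e^s-1}{s}\,ds+1-\sqrt{\nu(x)/\log x}+O(1/\log\log x)$ --- the last by estimating $\int_{\nu(x)}^{\omega(x)}$ via its linear term $\frac{e^{\nu(x)}-1}{\nu(x)}(\omega(x)-\nu(x))$ and using $e^{\nu(x)}=1+\sqrt{\nu(x)\log x}-\nu(x)$ --- so that the $O(1)$ terms $-1$ and $+1$ cancel while the $\sqrt{\nu(x)/\log x}$ terms are $o(1/\log\log x)$. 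Comparing with \eqref{peakheightvalue} then gives $h(x)=C(x)(1+O(1/\log\log x))$.

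The step I expect to be the main obstacle is the bookkeeping of error terms: one must verify at each stage that every accumulated error --- from the relative error in Theorem~\ref{thm:precfast}, from $\xi(u)-\omega(x)$, and from the Hildebrand, Alladi and prime-number-theorem approximations --- is genuinely $O(1/\omega(x))=O(1/\log\log x)$, and, in the passage from $\omega(x)$ to $\nu(x)$, that the several $O(1)$-sized contributions cancel exactly rather than leaving a spurious constant.
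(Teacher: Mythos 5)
Your proposal is correct and takes essentially the same route as the paper: rerun the proof of Theorem~\ref{cor:peakheight} with the unshifted $u=\frac{\log x}{\log p}$ (losing a $+\omega(x)$ in the exponent) and the extra factor $\frac{\log p}{p}=e^{\omega(x)}e^{-\sqrt{\omega(x)\log x}-3/4}(1+o(1))$ from $\pi(p)$ (restoring it), then convert from $\omega(x)$ to $\nu(x)$ via \eqref{wvsqrtlogterm} together with $\int_{\nu(x)}^{\omega(x)}\frac{e^s-1}{s}\,ds=1+O(1/\nu(x))$, so the two $O(1)$ shifts cancel. The error bookkeeping you flag as the main risk works out exactly as you describe, since every accumulated relative error is $o(1/\log\log x)$ except those explicitly tracked.
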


\begin{proof}
Because $\pi(y) = \frac{y}{\log y}\left(1+O\left(\frac{1}{\log y}\right)\right)$, the proof is essentially identical to that of Corollary \ref{cor:peakheight}, (again using $\omega(x)$ in place of $\nu(x)$) with the exception that in \eqref{peaku} we now have $u = \frac{\log x}{\log p}$, which causes us to lose a factor of $\omega(x)$ in the exponent of the expression \eqref{eq:maxrhou}, and that the final expression is multiplied by a factor of \[\log y = \sqrt{\omega(x)\log x}\left(1+O\left(\frac{1}{\sqrt{\omega(x)\log x}}\right)\right) = e^{\omega(x)} \left(1+O\left(\frac{1}{\sqrt{\omega(x)\log x}}\right)\right),\] which then restores that factor of $\omega(x)$ to the exponent. 

Using this, we obtain \eqref{fastvsmode} by using \eqref{wvsqrtlogterm} (which decreases the exponent by 1 when using $\nu(x)$) along with the observation that \begin{align*}
\int_{\nu(x)}^{\omega(x)} &\frac{e^s -1}{s} ds  = (\omega(x){-}\nu(x))\frac{e^{\nu(x)}{-}1}{\nu(x)} + O\left((\omega(x){-}\nu(x))\left(\frac{e^{\omega(x)}{-}1}{\omega(x)}{-}\frac{e^{\nu(x)}{-}1}{\nu(x)}\right)\right)\\
&=\left(\sqrt{\frac{\nu(x)}{\log x}}+O\left(\frac{1}{\sqrt{\nu(x)\log x}}\right)\right)\left(\sqrt{\frac{\log x}{\nu(x)}} -1\right) + O\left(\sqrt{\frac{\nu(x)}{\log x}}\left(\sqrt{\frac{\log x}{\omega(x)}}-\sqrt{\frac{\log x}{\nu(x)}}\right)\right)\\
&=1+O\left(\frac{1}{\nu(x)}\right),
\end{align*}
which, in turn, increases the exponent by 1.
\end{proof}

\section*{Acknowledgements}
I would like to thank my advisor, Carl Pomerance, for his invaluable guidance and encouragement throughout the
development of this paper.  I would also like to thank Florian Luca for listening to early versions of the arguments presented here, Jean-Marie de Koninck for pointing me to his work related to my problem, Sary Drappeau for suggesting the use of Saias' work to improve Theorem \ref{llqest}, Jonathan Bober for discussions about computing the popular primes, and Robin Pemantle for his interest in this project.

\bibliographystyle{amsplain}
\bibliography{popprimes}														
\end{document}